\documentclass[11pt]{article}

\usepackage[margin=1in]{geometry}
\usepackage{amsmath, amssymb, amsthm}
\usepackage{booktabs}
\usepackage{array}
\usepackage{enumitem}
\usepackage{graphicx}
\usepackage{hyperref}
\usepackage{listings}
\usepackage{tikz}

\newtheorem{theorem}{Theorem}
\newtheorem{proposition}[theorem]{Proposition}
\newtheorem{lemma}[theorem]{Lemma}
\newtheorem{corollary}[theorem]{Corollary}
\newtheorem{conjecture}[theorem]{Conjecture}

\theoremstyle{definition}
\newtheorem{remark}[theorem]{Remark}
\newtheorem{example}[theorem]{Example}
\newtheorem{definition}[theorem]{Definition}

\newcommand{\G}{\mathcal{G}}
\newcommand{\Z}{\mathbb{Z}}

\title{Holes in Valid-Extension Sets of Finite Gilbreath Sequences}
\author{Leila Muney}
\date{}

\begin{document}

\maketitle

\begin{abstract}
Given a finite sequence of integers, form its difference triangle by
repeatedly taking absolute differences of consecutive entries. We call
the sequence \emph{Gilbreath} if the leftmost entry of every row below
the top is $1$. The Gilbreath conjecture, which remains open, asserts that every initial segment of the primes is a Gilbreath sequence.

This paper studies the local extension problem: given a Gilbreath
sequence, which integers can be appended to it while preserving the
Gilbreath property? We call the set of such admissible values the
\emph{valid-extension set} of the sequence. A previously proposed
characterization in the literature predicts that this set always fills
a natural parity interval around the last term. We show that this
fails in general: the valid-extension set can have interior holes,
with the smallest failure occurring at length~$5$ for the sequence
$(2,3,5,9,15)$.

The paper develops a corrected extension set theory. We give an exact criterion
for membership in the valid-extension set, an algorithm that computes
it, and a sharp condition determining exactly when the set fills the
candidate interval. This last condition is an order-sensitive analogue
of the classical Brown completeness criterion for subset sums. We also
establish endpoint validity and reflection symmetry, determine the
exact minimum size of the valid-extension set together with its unique
minimizer, exhibit a family whose valid-extension set has exponentially
many components, and provide enumeration data through length~$11$.
\end{abstract}

\section{Introduction}\label{sec:intro}

Given a finite integer sequence $S = (s_1, \ldots, s_n)$, its
\emph{difference triangle} is defined by $s_a^0 := s_a$ and
\[
s_a^b := \left| s_{a+1}^{b-1} - s_a^{b-1} \right|
\quad \text{for } b \geq 1,\ 1 \leq a \leq n - b.
\]
The sequence is \emph{Gilbreath} if $s_1^b = 1$ for every
$1 \leq b \leq n - 1$. Figure~\ref{fig:generic-difference-triangle}
depicts the construction of the triangle for $n=5$.

\begin{figure}[htbp]
\centering
\[
\begin{array}{ccccc}
s_1 & s_2 & s_3 & s_4 & s_5 \\[0.6em]
& s_1^1 & s_2^1 & s_3^1 & s_4^1 \\[0.6em]
& & s_1^2 & s_2^2 & s_3^2 \\[0.6em]
& & & s_1^3 & s_2^3 \\[0.6em]
& & & & s_1^4
\end{array}
\]
\caption{Difference triangle of an arbitrary sequence of length $n=5$.
The sequence is Gilbreath if and only if
$s_1^1=s_1^2=s_1^3=s_1^4=1$.}
\label{fig:generic-difference-triangle}
\end{figure}

The iterated absolute-difference triangle was first studied by Proth
and rediscovered by Gilbreath in the context of the prime sequence.
Computer experiments by Killgrove and Ralston~\cite{killgrove} and
most extensively by Odlyzko~\cite{odlyzko}, the latter for primes up
to $10^{13}$ (about $3.4 \times 10^{11}$ primes), provide substantial
numerical evidence that initial segments of the primes are Gilbreath.
The assertion that this property holds for every initial segment of
the primes is known as \emph{Gilbreath's conjecture}, and remains
open. It is recorded as Problem~A10 in Guy~\cite{guy} and as Appendix
Problem~68 in Montgomery~\cite{montgomery}.

The features of the prime sequence responsible for the Gilbreath
property have been investigated through several lenses. Croft,
reported by Gardner~\cite{gardner}, observed that the property does
not seem to depend on primality in any deep sense: he conjectured
that any sequence beginning with $2$, continuing with odd numbers,
and having sufficiently small gaps should be Gilbreath. This
small-gap heuristic was later formalized in probabilistic form by
Chase~\cite{chase}, who proved that sequences beginning $2, 3$ with
random small gaps are almost surely Gilbreath.

Our finite family $\G_n$ (defined below) is naturally aligned with
this small-gap perspective, but the results in this paper do not
address the Gilbreath conjecture directly. Instead, we focus on a
finite, local question: \emph{given a Gilbreath sequence, which
integers can be appended to it while preserving the Gilbreath
property?}

For a Gilbreath sequence $S$, the \emph{valid extension set} is
\[
K_S := \{k \in \Z : (s_1, \ldots, s_n, k) \text{ is Gilbreath}\}.
\]
We call the cardinality $|K_S|$ the \emph{extension width} of $S$. We
work with the family
\[
\G_n := \{S = (s_1, \ldots, s_n) :
S \text{ is strictly increasing, Gilbreath, and } (s_1, s_2) = (2, 3)\}.
\]
The choice $(s_1, s_2) = (2, 3)$ is arbitrary: a shift argument
(Section~\ref{sec:shift}) shows that $|K_S|$ and the structure of
$K_S$ depend only on the gap sequence, so all results extend to the
shifted family with initial pair $(a, a+1)$ for any integer $a$.

\subsection{Note on a previously claimed interval characterization}\label{sec:gatti-note}

Gatti~\cite{gatti2023} introduces a nested-absolute-value equation
\[
\left| s_1^{n-1} - \left| s_2^{n-2} - \left| \cdots - |s_n - k| \cdots
\right| \right| \right| = 1
\]
characterizing membership of $k$ in $K_S$, and proposes to unfold this
into an independently signed sum
\[
k = \pm s_1^{n-1} \pm s_2^{n-2} \pm \cdots \pm s_{n-1}^1 + s_n \pm 1,
\]
treating the $n$ signs as freely chosen in $\{+1, -1\}$. However, the
signs in this unfolding are not independent in general; some
independent sign choices produce values that do not return a
Gilbreath sequence when appended to $S$.
Gatti~\cite{gatti2023} further claims that the set of all possible
values attainable from this formula produces a parity interval that
is equal to $K_S$. We show in Section~\ref{sec:signed-sum} that this
is not the case: the signed formula can both miss values in the candidate interval and include values that are not valid extensions.

The present paper develops a corrected extension-set theory. We give an exact
algorithm for $K_S$, clarify the relationship between the signed-sum
set of~\cite{gatti2023}, the candidate interval $C_S$, and the true
valid-extension set $K_S$, identify the surviving features (endpoint
validity, parity, and reflection symmetry), characterize precisely
when $K_S = C_S$, and study the extremal and disconnectedness
behavior of $K_S$.

\subsection{Notation guide}

For ease of reference, we collect the main notation used throughout the paper below.

\begin{description}[leftmargin=2.8cm, style=nextline]

\item[\(\G_n\)]
Strictly increasing Gilbreath sequences of length \(n\) beginning with
\((2,3)\).

\item[\(K_S\)]
The valid-extension set: all \(k\in\Z\) such that \((S,k)\) is Gilbreath.

\item[\(K_S^+\)]
The increasing valid extensions:
\[
K_S^+=\{k\in K_S:k>s_n\}.
\]

\item[\(e_i\)]
The right anti-diagonal entry
\[
e_i=s_{n-i}^i.
\]

\item[\(A(S)\)]
The anti-diagonal sum:
\[
A(S)=\sum_{i=1}^{n-1}e_i.
\]

\item[\(r_i\)]
The new right-edge entries created after appending a proposed extension
\(k\):
\[
r_0=|k-s_n|,\qquad r_i=|r_{i-1}-e_i|.
\]

\item[\(F_S\)]
The folding map determined by the right anti-diagonal:
\[
F_S(d)=\left|\cdots\left||d-e_1|-e_2\right|\cdots-e_{n-1}\right|.
\]
Thus \(F_S(|k-s_n|)=r_{n-1}\).

\item[\(C_S\)]
The candidate set:
\[
C_S = \{k \in \Z : |k - s_n| \leq A(S)+1,\ k \equiv s_n \pmod 2\};
\]
the parity-compatible interval of radius \(A(S)+1\) around \(s_n\).

\item[\(H_S\)]
The hole set:
\[
H_S=C_S\setminus K_S.
\]

\item[\(h(S)\)]
The defect:
\[
h(S)=|H_S|=|C_S|-|K_S|.
\]

\item[\(S_{\pm}\)]
The signed-sum set obtained by treating the signs in the unfolded
absolute-value expression as independent.

\item[\(W_S\)]
The weight multiset \(W_S=\{e_1,\ldots,e_{n-1},1\}\) associated to the
signed-sum relaxation.

\item[\(\Sigma(W)\)]
The set of subset sums of a multiset \(W\).

\item[\(D_S\)]
The valid distance set:
\[
D_S=\{|k-s_n|:k\in K_S\}.
\]

\item[\(P_e\)]
The reverse preimage step for \(x\mapsto |x-e|\).

\item[\(T_i\)]
The unnormalized reverse-tree sets used to compute \(D_S\).

\item[\(Q_a\)]
The normalized preimage step after dividing by \(2\).

\item[\(\widetilde{T}_i,L_i,a_i\)]
The normalized reverse-tree sets, interval lengths, and normalized
anti-diagonal entries used in the interval-completeness criterion.

\item[\(L_n\)]
The minimal sequence
\[
(2,3,5,7,\ldots,2n-1).
\]

\item[\(U_n\)]
The doubling sequence
\[
(2,3,5,9,17,\ldots,2^{n-1}+1).
\]

\item[\(V_n\)]
The component-doubling family from Section~\ref{sec:Vn}.

\item[\(M_n\)]
The maximum extension width:
\[
M_n=\max_{S\in\G_n}|K_S|.
\]

\item[\(m_n\)]
The minimum extension width:
\[
m_n=\min_{S\in\G_n}|K_S|.
\]

\item[\(N_n\)]
The number of sequences in \(\G_n\):
\[
N_n=|\G_n|.
\]

\end{description}

\subsection{Summary of main results}\label{sec:summary}

The paper has three main parts. We summarize them here and indicate
where the main results are proved.

First, we give an exact criterion for valid extensions. The right
anti-diagonal of the difference triangle determines an iterated
absolute-value map \(F_S\), and a proposed extension \(k\) is valid
exactly when
\[
F_S(|k-s_n|)=1
\]
(Proposition~\ref{prop:criterion}). This identifies the valid distance
set as a fiber of a composition of folding maps and leads to the
reverse-tree algorithm for computing \(K_S\) exactly
(Proposition~\ref{prop:reverse}). The criterion immediately yields the candidate bound
$K_S\subseteq C_S$ (Corollary~\ref{cor:candidate}), and a short
parity argument gives endpoint validity and reflection symmetry of
$K_S$ (Theorems~\ref{thm:endpoints} and \ref{thm:symmetry}).

Second, we compare the true extension set with the signed-sum relaxation
implicit in~\cite{gatti2023}. We show that the signed-sum set is an
affine image of a subset-sum set associated to the right anti-diagonal
(Theorem~\ref{thm:subset-reformulation}). Thus the question of when the
signed sums fill the natural candidate interval \(C_S\) is governed by
Brown's classical criterion for when subset sums fill a full interval.
The equality \(K_S=C_S\) is more rigid: the signs must be compatible
with the ordered nested absolute-value recurrence. Our main structural
theorem gives the exact ordered analogue:
\[
K_S=C_S
\quad\Longleftrightarrow\quad
e_i\leq 1+\sum_{j>i}e_j
\quad (1\leq i\leq n-2).
\]
This is Theorem~\ref{thm:criterion}.

Third, we study the consequences of this criterion. We identify the
first failure of interval-completeness: for \(n\leq 4\) all sequences in
\(\G_n\) are interval-complete, while at \(n=5\) the unique
counterexample is \((2,3,5,9,15)\), with a single hole at \(15\)
(Theorem~\ref{thm:hole}). We determine the minimum possible extension
width, showing that it is \(5\) for every \(n\geq 3\), uniquely achieved
by \(L_n=(2,3,5,7,\ldots,2n-1)\)
(Theorem~\ref{thm:min}). We also compute the extension width of the doubling sequence
$U_n=(2,3,5,9,17,\ldots,2^{n-1}+1)$, obtaining $|K_{U_n}|=2^{n-1}+1$
(Theorem~\ref{thm:doubling}); exhaustive computation through $n\leq 10$
shows this value is the maximum extension width in $\G_n$, giving rise
to Conjecture~\ref{conj:max}. We construct an explicit family
$V_n\in\G_n$ whose valid-extension set has exactly $2^{n-4}$ connected
components in the parity lattice (Theorem~\ref{thm:Vn}), so the
maximum component count over $\G_n$ grows exponentially in $n$.
Finally, we give enumeration data for $N_n=|\G_n|$ through $n\leq 11$
and extremal data through $n\leq 10$ (Section~\ref{sec:data}).

\subsection{Related work}\label{sec:related}

Beyond the historical references in the introduction, the present
paper relates to several active threads in additive number theory and
combinatorial dynamics.

The signed-sum relaxation arising in this paper connects the extension problem to the classical theory of subset sums and complete sequences. Brown's
criterion gives a necessary and sufficient condition for the subset
sums of a finite sequence of nonnegative integers to fill the entire
interval from $0$ to their total sum~\cite{brown1961}. This is the
finite completeness criterion used in Section~\ref{sec:subset-sum} to
characterize when $S_\pm = C_S$. Complete sequences and related
subset-sum questions also appear in the Erd\H{o}s line of additive
number theory: Burr and Erd\H{o}s studied Ramsey-type completeness
properties~\cite{burr-erdos}, and Conlon, Fox, and Pham recently
resolved several problems on subset sums, completeness, and
colorings, including questions of Burr and
Erd\H{o}s~\cite{conlon-fox-pham}. In this terminology, the classical
completeness criterion governs when the signed sums fill the
candidate interval. Our main interval-completeness theorem identifies
a strictly stronger order-sensitive condition governing when the
true valid-extension set fills the interval. The gap between these
two conditions captures the consistency required by the nested
absolute-value structure.

Chase~\cite{chase}, mentioned in the introduction, formalizes
Croft's small-gap heuristic in probabilistic form. Like ours,
Chase's model fixes the starting point $2, 3$ and treats subsequent
entries as free, but the questions are different: Chase studies
probabilistic eventual Gilbreath behavior, whereas we study the local
finite extension problem of determining exactly which next values
preserve the Gilbreath property. More recently, Chase, Hunter, and
Tao~\cite{chase-hunter-tao} combine probabilistic and deterministic
approaches to Gilbreath's conjecture, proving a Cram'er random-model
analogue and establishing a deterministic inverse theorem that
identifies the principal obstructions to the Gilbreath property under
suitable assumptions on prime gaps. Their work seeks to understand
global mechanisms governing Gilbreath's conjecture, whereas our focus
is complementary: we develop an exact structural theory of the finite
valid-extension set $K_S$, giving complete characterizations,
algorithms, and extremal results for the local extension problem.

Granville~\cite{granville2026} also studies Gilbreath's conjecture
from a global perspective, developing a framework based on sieving,
reverse sieving, and equivalence classes of finite sequences to reduce
the conjecture to a collection of representative cases. While his
approach likewise aims to understand the conjecture itself, our work
instead analyzes the finite extension problem for a fixed Gilbreath
sequence, characterizing the exact set of admissible next values and
the combinatorial structure of the resulting valid-extension set
$K_S$.

Bhat, Cobeli, and Zaharescu~\cite{bcz2024} study the same
Proth--Gilbreath triangle as a discrete dynamical system, introducing
the operator $\Upsilon$ that sends the top row to the left edge and
analyzing its six-fold ``helicoidal'' iteration, an associated
$\mathbb{F}_2[[X]]$ involution
$T(f)(X) = f\!\big(X/(1+X)\big) \cdot (1+X)^{-1}$, and the
statistical distribution of $0$'s and nonzero entries along rays
parallel to an edge. Their padding construction
(\cite[Prop.~3.1]{bcz2024}) builds a triangle backwards from a
prescribed southern vertex by choosing eastern-edge values. This is
reminiscent of our reverse-tree process (Section~\ref{sec:reverse}),
but the inverse problem is different: their construction pads the
\emph{eastern} edge to realize a single target apex, whereas our
reverse tree runs up the \emph{right anti-diagonal} from the apex
value $1$ to enumerate the entire valid-extension set $K_S$.
Equivalently, our valid distance set is the fiber over $1$ of an
ordered composition of folding maps $x \mapsto |x - e_i|$, with the
fold parameters supplied by the right anti-diagonal. Earlier work in
this dynamical direction includes the Proth--Gilbreath analogue of
Caragiu, Zaharescu, and Zaki~\cite{czz} and the quasi-periodicity
study of Bhat, Cobeli, and Zaharescu~\cite{bcz-quasi}.

Agama~\cite{agama} reformulates Gilbreath's conjecture through a
``gap sequence / path / circuit'' framework. While both Agama's
paper and ours provide a finite structural reframing, our machinery
and goals are different. Agama examines gap sequences through path
combinatorics, while we examine the combinatorial and additive
structure of the extension set of a fixed finite sequence.

It is important to note that the counts $N_n = |\G_n|$ coincide
(after an index shift) with OEIS sequence~\cite{A080839}, where a
comment of T.~D.~Noe already identifies that the slowest- and
fastest-growing length-$n$ Gilbreath sequences are the minimal
sequence $L_n = (2, 3, 5, 7, \ldots, 2n - 1)$ and the doubling
sequence $(2, 3, 5, 9, 17, \ldots)$, respectively. Our minimum
extension-width theorem (Theorem~\ref{thm:min}) and doubling-sequence
extension-width formula (Theorem~\ref{thm:doubling}) show that these
sequences are also extremal for extension width: the minimal sequence
is the unique minimizer for all $n \geq 3$, and exhaustive
computation shows that the doubling sequence is the maximizer for
$n \leq 10$. We attribute the growth extremizer identification
to~\cite{A080839} and claim novelty only for the cardinality formulas
of the corresponding $K_S$ and the general structural theory.

For clarity, we explicitly state what we do and do not claim as new.
We do \emph{not} claim novelty for the enumeration $N_n = |\G_n|$,
which coincides with OEIS~\cite{A080839}, nor for the identification
of the minimal sequence and doubling sequence as the extremal-growth
sequences, which is stated as a note there. We also do not claim
novelty for the classical subset-sum completeness criterion used to
analyze $S_\pm$, which is due to Brown~\cite{brown1961}. The
contributions we believe to be new are the structural theory of the
valid-extension set $K_S$: the exact membership criterion, the
reverse-tree algorithm producing $K_S$, the interpretation of
$K_S = C_S$ as an ordered folding analogue of classical subset-sum
completeness, and the interval-completeness criterion
(Theorem~\ref{thm:criterion}). We also identify the first hole, prove
the exact minimum extension-width theorem with uniqueness
(Theorem~\ref{thm:min}), and identify the structural properties of
the exponentially disconnected family $V_n$ (Theorem~\ref{thm:Vn}).
The correction to the interval-filling claim of~\cite{gatti2023} is
the conceptual starting point, but the paper develops a broader
extension-set theory around it. We note that these originality claims
are based on searches in the literature and databases, and we would
welcome correction.

For more context on iterated-difference and difference-triangle
sequences, see the OEIS discussion in Section~\ref{sec:oeis}.

\section{The exact extension criterion}\label{sec:criterion-section}

Throughout, for \(S\in\G_n\) we define the \emph{right anti-diagonal}
\[
e_i:=s_{n-i}^i,\qquad 1\leq i\leq n-1.
\]
Thus \(e_1=s_{n-1}^1=s_n-s_{n-1}\) is the last gap, while
\(e_{n-1}=s_1^{n-1}=1\) is the bottom entry of the triangle. We also set
\[
A(S):=\sum_{i=1}^{n-1} e_i.
\]

\begin{example}\label{ex:notation}
For \(S=(2,3,5,9,15)\), the difference triangle is
\[
\begin{array}{rrrrr}
2 & 3 & 5 & 9 & 15 \\
& 1 & 2 & 4 & 6 \\
& & 1 & 2 & 2 \\
& & & 1 & 0 \\
& & & & 1
\end{array}
\]
The left diagonal below the top row is \((1,1,1,1)\), so \(S\) is
Gilbreath. The right anti-diagonal is
\[
(e_1,e_2,e_3,e_4)=(6,2,0,1),
\]
and \(A(S)=9\). Figure~\ref{fig:anatomy} highlights these two parts of
the triangle.
\end{example}

\begin{figure}[htbp]
\centering
\begin{tikzpicture}[
    cell/.style={minimum width=8mm, minimum height=7mm, font=\small, inner sep=2pt},
    leftbox/.style={cell, draw=red!70!black, thick, fill=red!10, rounded corners=1.5pt},
    diagbox/.style={cell, draw=blue!70!black, thick, fill=blue!10, rounded corners=1.5pt},
    bothbox/.style={cell, draw=violet, thick, fill=violet!15, rounded corners=1.5pt},
]
\node[cell] at (0,4) {$2$};
\node[cell] at (1,4) {$3$};
\node[cell] at (2,4) {$5$};
\node[cell] at (3,4) {$9$};
\node[cell] at (4,4) {$15$};

\node[leftbox] at (1,3) {$1$};
\node[cell]    at (2,3) {$2$};
\node[cell]    at (3,3) {$4$};
\node[diagbox] at (4,3) {$6$};

\node[leftbox] at (2,2) {$1$};
\node[cell]    at (3,2) {$2$};
\node[diagbox] at (4,2) {$2$};

\node[leftbox] at (3,1) {$1$};
\node[diagbox] at (4,1) {$0$};

\node[bothbox] at (4,0) {$1$};

\begin{scope}[shift={(5.4,2.5)}]
  \node[leftbox, anchor=west] at (0, 0.7) {};
  \node[anchor=west, font=\footnotesize] at (0.7, 0.7) {left diagonal $=1$ (Gilbreath)};
  \node[diagbox, anchor=west] at (0, 0) {};
  \node[anchor=west, font=\footnotesize] at (0.7, 0) {right anti-diagonal};
  \node[bothbox, anchor=west] at (0, -0.7) {};
  \node[anchor=west, font=\footnotesize] at (0.7, -0.7) {both (apex)};
\end{scope}
\end{tikzpicture}
\caption{Anatomy of the difference triangle of \(S=(2,3,5,9,15)\). The
left diagonal (red) consists of \(1\)'s, which is the defining
Gilbreath condition. The right anti-diagonal (blue) is
\((e_1,e_2,e_3,e_4)=(6,2,0,1)\). When a proposed extension \(k\) is
appended, the new right-edge entries are computed by comparing the
previous new entry with these anti-diagonal entries. The bottom apex
\(1\) belongs to both structures, since \(e_{n-1}=s_1^{n-1}\).}
\label{fig:anatomy}
\end{figure}

The right anti-diagonal determines a composition of folding maps. Define
\[
F_S:\Z_{\geq 0}\to\Z_{\geq 0},\qquad
F_S(d):=\left|\cdots\left||d-e_1|-e_2\right|\cdots-e_{n-1}\right|.
\]
Here the entries \(e_1,e_2,\ldots,e_{n-1}\) are applied in their fixed
anti-diagonal order. This order is part of the structure; unlike the
signed-sum relaxation studied later, the fold parameters cannot be
sorted or chosen independently.

Given a proposed extension \(k\), set
\[
r_0:=|k-s_n|
\]
and then recursively
\[
r_i:=|r_{i-1}-e_i|,\qquad 1\leq i\leq n-1.
\]
Thus \(r_0\) is the new entry created in row \(1\) after appending \(k\),
\(r_1\) is the new entry created in row \(2\), and so on. Equivalently,
if \(d=|k-s_n|\), then
\[
F_S(d)=r_{n-1}.
\]
In particular, \(F_S(|k-s_n|)\) is the new bottom entry of the extended
triangle.

The following criterion is essentially bookkeeping: appending \(k\) only
creates one new right-edge entry in each row, and those entries are
exactly the \(r_i\)'s.

\begin{proposition}[Iterated absolute-value criterion]\label{prop:criterion}
Let \(S\in\G_n\) and \(k\in\Z\). Then
\[
k\in K_S
\quad\Longleftrightarrow\quad
F_S(|k-s_n|)=1.
\]
Equivalently, with \(r_0,
\ldots,r_{n-1}\) defined as above,
\[
k\in K_S
\quad\Longleftrightarrow\quad
r_{n-1}=1.
\]
\end{proposition}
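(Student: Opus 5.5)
The plan is to compare the difference triangle of the extended sequence $S'=(s_1,\ldots,s_n,k)$ with that of $S$. The first step is to observe that the triangle of $S'$ contains the triangle of $S$ as a sub-triangle. The entry $s'^{\,b}_a$ depends only on $s_a,\ldots,s_{a+b}$, so for $a+b\le n$ it coincides with $s_a^b$. Consequently the only new entries of the triangle of $S'$ are those with $a+b=n+1$, namely $s'^{\,b}_{n+1-b}$ for $0\le b\le n$. Here $b=0$ gives $k$ itself, and the remaining $n$ entries form one new right-edge entry in each row $1,\ldots,n$.

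The second step identifies these new entries with the $r_i$ by induction on $i$. I will show that $s'^{\,i+1}_{n-i}=r_i$ for $0\le i\le n-1$.

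\begin{itemize}
\item For $i=0$, the new entry in row~1 is $s'^{\,1}_{n}=|k-s_n|=r_0$.
\item For the inductive step, the defining recurrence gives
\[
s'^{\,i+1}_{n-i}=\bigl|s'^{\,i}_{n-i+1}-s'^{\,i}_{n-i}\bigr|.
\]
The first term $s'^{\,i}_{n-i+1}$ is the new entry in row $i$, which equals $r_{i-1}$ by induction. The second term $s'^{\,i}_{n-i}$ is an old entry, since $(n-i)+i=n$, so it equals $s^i_{n-i}=e_i$.
\end{itemize}

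Hence $s'^{\,i+1}_{n-i}=|r_{i-1}-e_i|=r_i$. In particular, the new apex $s'^{\,n}_1$ equals $r_{n-1}=F_S(|k-s_n|)$.

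The third step concludes. By definition, $S'$ is Gilbreath iff $s'^{\,b}_1=1$ for $1\le b\le n$. For $b\le n-1$ we have $s'^{\,b}_1=s^b_1$, because $1+b\le n$, and these values already equal $1$ since $S\in\G_n$ is Gilbreath. The only remaining condition is therefore $s'^{\,n}_1=1$, which says exactly $r_{n-1}=1$, or equivalently $F_S(|k-s_n|)=1$.

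There is no real obstacle in this argument. The only point requiring care is the index bookkeeping in the inductive step: one must check that the left neighbor used in the recurrence is an old entry lying on the right anti-diagonal, i.e.\ that $s'^{\,i}_{n-i}=e_i$, while the right neighbor is the previously created new entry.
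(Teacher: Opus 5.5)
Your proposal is correct and follows the paper's proof: both observe that appending $k$ adds exactly one new right-edge entry per row, identify these entries inductively with $r_0,\ldots,r_{n-1}$ via the old anti-diagonal entries $e_i$, and note that the Gilbreath condition for the extended sequence reduces to the new apex being $1$. Your version just makes explicit the index bookkeeping ($s'^{\,i+1}_{n-i}=r_i$ and $s'^{\,i}_{n-i}=s^i_{n-i}=e_i$) that the paper leaves implicit.
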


\begin{proof}
Appending \(k\) to \(S\) creates one new entry on the right side of each
row of the difference triangle. The new entry in row \(1\) is
\[
r_0=|k-s_n|.
\]
If the new entry in row \(i\) is \(r_{i-1}\), then the old rightmost
entry in that row is \(e_i\), so the new entry in the next row is
\[
|r_{i-1}-e_i|=r_i.
\]
Therefore \(r_{n-1}=F_S(|k-s_n|)\) is exactly the new bottom entry of the
extended triangle. Since all old entries are unchanged, the extended
sequence is Gilbreath if and only if this new bottom entry is \(1\).
\end{proof}

It is useful to record the corresponding fiber interpretation. If
\[
D_S:=\{|k-s_n|:k\in K_S\}
\]
is the valid distance set, then Proposition~\ref{prop:criterion} gives
\[
D_S=\{d\in\Z_{\geq 0}:F_S(d)=1\}.
\]
Thus the extension problem is an inverse problem for a finite composition
of folding maps \(x\mapsto |x-e_i|\). The reverse-tree algorithm in
Section~\ref{sec:reverse} computes this fiber exactly.

\begin{corollary}[Candidate bound]\label{cor:candidate}
If \(k\in K_S\), then
\[
|k-s_n|\leq A(S)+1.
\]
\end{corollary}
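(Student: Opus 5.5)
The plan is to reverse the folding recursion of Proposition~\ref{prop:criterion} and bound each $r_i$ by a partial sum of the anti-diagonal. Let $k\in K_S$ and define $r_0=|k-s_n|$ and $r_i=|r_{i-1}-e_i|$ as before. The proposition gives $r_{n-1}=1$. The key elementary fact is that for nonnegative integers $x,e$ we have $x\le |x-e|+e$. This holds because if $x\ge e$ then $|x-e|+e=x$, and if $x<e$ then $|x-e|+e=2e-x>x$. Applied to $x=r_{i-1}$ and $e=e_i$, it yields
\[
r_{i-1}\le r_i+e_i,\qquad 1\le i\le n-1.
\]
All $r_i$ and $e_i$ are nonnegative, since they are absolute values or entries of the difference triangle, so the fact applies at every step.

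Next I would chain these inequalities downward from $r_{n-1}=1$. A backward induction gives, for $0\le i\le n-1$,
\[
r_i\le 1+\sum_{j=i+1}^{n-1} e_j.
\]
The base case $i=n-1$ is the equality $r_{n-1}=1$. The induction step is exactly the displayed inequality $r_{i-1}\le r_i+e_i$. Taking $i=0$ gives
\[
|k-s_n|=r_0\le 1+\sum_{j=1}^{n-1}e_j=A(S)+1,
\]
which is the claim.

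There is no real obstacle. The only point needing care is the inequality $x\le|x-e|+e$, and specifically its case $x<e$, where folding increases the value but the bound still holds. One could also note that the parity condition defining $C_S$ is not needed for this corollary; it would follow separately by reducing the recursion mod $2$.
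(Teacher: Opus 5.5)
Your proof is correct, but it runs in the opposite direction from the paper's. The paper argues by contrapositive. Assuming $d=|k-s_n|>A(S)+1$, it shows by forward induction that $r_{i-1}>e_i$ at every step, so no fold ever reflects and $r_{n-1}=d-A(S)>1$. You instead telescope the unconditional one-step inequality $r_{i-1}\le r_i+e_i$ backward from $r_{n-1}=1$.

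That inequality is just $|x-e|\ge x-e$, which holds for all integers $x,e$. So your case split and the nonnegativity remark are not actually needed.

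Your route is shorter and case-free, and it proves the global estimate $F_S(d)\ge d-A(S)$ for every $d\ge 0$. The paper's route proves something sharper in its regime: for $d>A(S)+1$ the map $F_S$ is an exact translation, $F_S(d)=d-A(S)$. The paper reuses that exact no-reflection bookkeeping at $d=A(S)+1$ to prove endpoint validity (Theorem~\ref{thm:endpoints}). Your inequality alone would give only $F_S(A(S)+1)\ge 1$ there, not equality.
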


\begin{proof}
Let
\[
d:=|k-s_n|.
\]
Suppose
\[
d>A(S)+1=e_1+\cdots+e_{n-1}+1.
\]
We show that no sign flip occurs while computing the \(r_i\)'s. Since
\(r_0=d\), the claim is true at the beginning. If
\[
r_{i-1}=d-(e_1+\cdots+e_{i-1}),
\]
then
\[
r_{i-1}
>
e_i+e_{i+1}+\cdots+e_{n-1}+1
\geq e_i.
\]
Hence
\[
r_i=|r_{i-1}-e_i|=r_{i-1}-e_i
=d-(e_1+\cdots+e_i).
\]
By induction,
\[
r_{n-1}=d-(e_1+\cdots+e_{n-1})=d-A(S)>1.
\]
Thus \(F_S(d)=r_{n-1}\neq 1\), so \(k\notin K_S\). Taking the
contrapositive gives the desired bound.
\end{proof}

\section{Parity}\label{sec:parity}

The right-adjusted display of the difference triangle is most naturally
read along diagonals rather than columns. For fixed \(a\), the entries
\[
s_a,\ s_a^1,\ s_a^2,\ \ldots,\ s_a^{n-a}
\]
form one diagonal of the triangle. The first diagonal is special:
\(s_1=2\) is even, while the Gilbreath condition says
\[
s_1^1=s_1^2=\cdots=s_1^{n-1}=1.
\]
Thus the first diagonal has parity pattern
\[
\text{even},\ \text{odd},\ \text{odd},\ \ldots,\ \text{odd}.
\]

\begin{lemma}\label{lem:parity}
For every \(S=(s_1,\ldots,s_n)\in\G_n\), every term \(s_a\) with
\(a\geq 2\) is odd, and every positive-row entry \(s_a^b\) with
\(a\geq 2\) and \(b\geq 1\) is even. Equivalently, every diagonal after
the first begins with an odd entry and then consists entirely of even
entries. In particular,
\[
e_1,e_2,\ldots,e_{n-2}
\]
are even, while
\[
e_{n-1}=1.
\]
\end{lemma}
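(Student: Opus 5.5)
We work modulo $2$, where absolute differences become sums: $s_a^b\equiv s_a^{b-1}+s_{a+1}^{b-1}\pmod 2$. So the parity of the whole triangle is determined by the top row, or equally by the first diagonal together with the rule propagating along diagonals. The plan is to prove the lemma diagonal by diagonal, by induction on the diagonal index $a$, treating the diagonals of the right-adjusted display as the unit of induction.

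\textbf{Base case: the second diagonal.} The first diagonal has parity pattern even, odd, odd, $\ldots$, odd. Since $s_1=2$ and $s_1^1=s_2-s_1=1$ (the sequence is increasing), we get $s_2=3$, which is odd. For $b\ge1$, rearranging the recurrence gives
\[
s_2^{b}\equiv s_1^{b+1}+s_1^{b}\pmod 2 .
\]
Both terms on the right equal $1$, so $s_2^b$ is even. This covers all $b$ with $b+1\le n-1$, which is exactly the range of entries on the second diagonal. The second diagonal therefore begins odd and is even afterwards.

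\textbf{Inductive step.} Assume diagonal $a\ge2$ has the pattern odd, even, even, $\ldots$. Then
\[
s_{a+1}\equiv s_a+s_a^1\equiv 1+0,
\]
so $s_{a+1}$ is odd. For $b\ge1$,
\[
s_{a+1}^b\equiv s_a^{b+1}+s_a^{b}\equiv 0+0 \pmod 2 .
\]
Here both $b,b+1\ge1$, so both terms are even. The index ranges are consistent: $s_{a+1}^b$ exists precisely when $b\le n-a-1$, and then $s_a^{b+1}$ exists because $b+1\le n-a$. This completes the induction.

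\textbf{Consequences for the anti-diagonal.} For $1\le i\le n-2$ we have $e_i=s_{n-i}^i$ with $n-i\ge2$ and $i\ge1$, so $e_i$ is even. The remaining entry $e_{n-1}=s_1^{n-1}$ equals $1$ by the Gilbreath condition.

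The argument is routine. The only point needing care is the bookkeeping in the base case: it uses that every $s_1^b$ with $b\ge1$ is odd, including the last entry $s_1^{n-1}$. It also uses the increasing hypothesis (or simply $|s_2-2|=1$) to get that $s_2$ is odd, which also follows directly from $s_2\equiv s_1+s_1^1$.
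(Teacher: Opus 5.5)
Your proof is correct and follows the paper's argument essentially verbatim: reduce the recurrence mod $2$, start from the first diagonal's pattern (even, odd, \ldots, odd), derive the second diagonal, propagate the pattern along diagonals by induction, and then read off the parity of the anti-diagonal entries. Your explicit check that the index ranges match at each step is a small addition that the paper leaves implicit.
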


We note that this lemma is also stated in~\cite{gatti2023}. We include
a proof for completeness.

\begin{proof}
We work modulo \(2\). Since signs and absolute values do not matter
modulo \(2\), the recurrence
\[
s_a^b=\left|s_{a+1}^{b-1}-s_a^{b-1}\right|
\]
becomes
\[
s_a^b\equiv s_a^{b-1}+s_{a+1}^{b-1}\pmod 2.
\]
Equivalently,
\[
s_{a+1}^{b-1}\equiv s_a^b+s_a^{b-1}\pmod 2.
\]
Thus, once one diagonal
\[
s_a,\ s_a^1,\ s_a^2,\ldots
\]
is known modulo \(2\), the next diagonal is obtained by adding adjacent
entries on that diagonal.

The first diagonal is known: \(s_1=2\) is even, and the Gilbreath
condition gives
\[
s_1^1=s_1^2=\cdots=s_1^{n-1}=1.
\]
Thus the first diagonal has parity pattern
\[
\text{even},\ \text{odd},\ \text{odd},\ldots,\text{odd}.
\]
For a triangle of size \(5\), the resulting parity pattern is
\[
\begin{array}{ccccc}
E & O & O & O & O \\
& O & E & E & E \\
& & O & E & E \\
& & & O & E \\
& & & & O
\end{array}
\]
where \(E\) denotes even and \(O\) denotes odd.

The general case follows by the same propagation. First, the second
diagonal has the desired pattern, since
\[
s_2\equiv s_1+s_1^1\equiv E+O\equiv O,
\]
while for \(b\geq 1\),
\[
s_2^b\equiv s_1^{b+1}+s_1^b\equiv O+O\equiv E.
\]
Now suppose some diagonal \(a\geq 2\) begins with an odd entry and has
only even entries below it:
\[
s_a\equiv 1\pmod 2,
\qquad
s_a^b\equiv 0\pmod 2\quad (b\geq 1).
\]
Then the next diagonal satisfies
\[
s_{a+1}\equiv s_a+s_a^1\equiv 1+0\equiv 1\pmod 2,
\]
so its top entry is odd. For every \(b\geq 1\),
\[
s_{a+1}^b\equiv s_a^{b+1}+s_a^b\equiv 0+0\equiv 0\pmod 2,
\]
so all lower entries are even. By induction, every diagonal after the
first has this pattern.

Finally, \(e_i=s_{n-i}^i\). For \(1\leq i\leq n-2\), the entry
\(e_i\) lies below the top of one of the later diagonals, so it is even.
The last anti-diagonal entry is
\[
e_{n-1}=s_1^{n-1}=1
\]
by the Gilbreath condition.
\end{proof}

\begin{corollary}\label{cor:parity}
For every \(S\in\G_n\), every \(k\in K_S\) satisfies
\[
k\equiv s_n\pmod 2.
\]
In the normalization \(s_1=2\), every \(k\in K_S\) is odd.
\end{corollary}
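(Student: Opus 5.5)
The plan is to reduce the claim to a parity computation along the recursion of Proposition~\ref{prop:criterion}, using the parity information from Lemma~\ref{lem:parity}. Let $k\in K_S$ and put $d=|k-s_n|$. By Proposition~\ref{prop:criterion}, the chain
\[
r_0=d,\qquad r_i=|r_{i-1}-e_i|\quad(1\le i\le n-1)
\]
ends with $r_{n-1}=1$. Absolute values and signs do not affect parity, so I will work modulo $2$ throughout. This gives $r_i\equiv r_{i-1}+e_i \pmod 2$, and telescoping yields
\[
r_{n-1}\equiv d+\sum_{i=1}^{n-1}e_i = d+A(S)\pmod 2 .
\]

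Next I will evaluate $A(S)$ modulo $2$ using Lemma~\ref{lem:parity}. That lemma says $e_1,\ldots,e_{n-2}$ are even and $e_{n-1}=1$. Hence $A(S)\equiv 1\pmod 2$.

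Combining this with $r_{n-1}=1$ gives $1\equiv d+1$, so $d$ is even. Then $d=|k-s_n|\equiv k-s_n\pmod 2$, and therefore $k\equiv s_n\pmod 2$.

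For the normalization $s_1=2$, I will take two cases. When $n\ge 2$, Lemma~\ref{lem:parity} gives that $s_n$ is odd, so every $k\in K_S$ is odd. When $n=1$, the recursion has no steps, so the argument above does not apply directly; instead the condition on the length-$2$ sequence is simply $|k-2|=1$, which again forces $k$ to be odd. Since $\G_n$ starts with $(2,3)$, in practice $n\ge2$ anyway.

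I expect no real obstacle. The only point needing care is confirming that every $e_i$, including $e_{n-1}$, is included in the telescoped sum. This holds because the recursion runs through all indices $i=1,\ldots,n-1$.
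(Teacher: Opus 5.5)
Your proposal is correct and follows essentially the paper's argument: both reduce the recursion $r_i=|r_{i-1}-e_i|$ modulo $2$ and use Lemma~\ref{lem:parity} ($e_1,\ldots,e_{n-2}$ even, $e_{n-1}=1$) to force $r_0=|k-s_n|$ to be even. The only difference is presentation: you telescope once to $r_{n-1}\equiv d+A(S)$, while the paper tracks parity backward step by step. Your $n=1$ aside is unnecessary, since membership in $\G_n$ already requires $(s_1,s_2)=(2,3)$ and hence $n\ge 2$.
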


\begin{proof}
Let \(k\in K_S\), and define \(r_0,\ldots,r_{n-1}\) as in
Section~\ref{sec:criterion-section}. By Proposition~\ref{prop:criterion},
\[
r_{n-1}=1,
\]
which is odd.

Since
\[
r_{n-1}=|r_{n-2}-e_{n-1}|
\]
and \(e_{n-1}=1\), the value \(r_{n-2}\) must be even. For all earlier
steps, the entries
\[
e_1,e_2,\ldots,e_{n-2}
\]
are even by Lemma~\ref{lem:parity}. Subtracting an even number and
taking an absolute value does not change parity. Therefore the parity
of
\[
r_{n-2},r_{n-3},\ldots,r_0
\]
is the same. In particular, \(r_0\) is even.

But
\[
r_0=|k-s_n|.
\]
Thus \(k-s_n\) is even, so
\[
k\equiv s_n\pmod 2.
\]
Finally, \(s_n\) is odd by Lemma~\ref{lem:parity}, so every
\(k\in K_S\) is odd.
\end{proof}

\section{Candidate set, holes, and defect}\label{sec:defect}

\begin{definition}
The \emph{candidate set} of $S \in \G_n$ is
\[
C_S := \{k \in \Z : |k - s_n| \leq A(S) + 1,\ k \equiv s_n \pmod 2\}.
\]
\end{definition}

By Corollary~\ref{cor:candidate} and Corollary~\ref{cor:parity},
$K_S \subseteq C_S$ always.

\begin{lemma}\label{lem:CS-count}
$|C_S| = A(S) + 2$.
\end{lemma}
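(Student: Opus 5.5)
The plan is to count $C_S$ directly as a parity-restricted interval centred at $s_n$, and the key input is that $A(S)$ is odd. By Lemma~\ref{lem:parity}, the entries $e_1,\ldots,e_{n-2}$ are even and $e_{n-1}=1$. Hence
\[
A(S)=\sum_{i=1}^{n-2}e_i+1
\]
is odd, and $R:=A(S)+1$ is even.

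Next, substitute $k=s_n+t$. The conditions defining $C_S$ become $|t|\leq R$ and $t\equiv 0\pmod 2$, so $C_S$ is in bijection with
\[
\{t\in\Z: t \text{ even},\ -R\leq t\leq R\}.
\]
Since $R$ is even, both endpoints $\pm R$ are even, and this set is $\{-R,-R+2,\ldots,R-2,R\}$. It has $R/2$ negative elements, $R/2$ positive elements, and $0$, for a total of $R+1=A(S)+2$ elements.

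The only point needing care is the parity of $A(S)$. If $A(S)$ were even, the endpoints $\pm(A(S)+1)$ would be odd, and the count would instead be $A(S)+1$. The argument therefore rests entirely on invoking Lemma~\ref{lem:parity}, in particular $e_{n-1}=1$, which holds for $S\in\G_n$ with $n\geq 2$. Everything else is routine counting.
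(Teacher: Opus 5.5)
Your proposal is correct and follows the paper's proof: both use Lemma~\ref{lem:parity} to show $A(S)$ is odd, so $A(S)+1$ is even, and then count the parity-compatible integers in $[s_n-A(S)-1,\,s_n+A(S)+1]$. You spell out the count of even offsets $\{-R,\ldots,R\}$ explicitly, where the paper simply states the total $A(S)+2$.
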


\begin{proof}
By Lemma~\ref{lem:parity}, $e_1, \ldots, e_{n-2}$ are even and
$e_{n-1} = 1$, so $A(S)$ is odd and $A(S) + 1$ is even. The interval
$|k - s_n| \leq A(S) + 1$ restricted to $k \equiv s_n \pmod 2$
contains $A(S) + 2$ integers.
\end{proof}

\begin{definition}
The \emph{hole set} of $S$ is $H_S := C_S \setminus K_S$, and the
\emph{defect} is $h(S) := |H_S| = A(S) + 2 - |K_S|$. We say $S$ is
\emph{interval-complete} if $h(S) = 0$ (equivalently, $K_S = C_S$).
\end{definition}

Thus the previously claimed interval characterization is equivalent to
the assertion $h(S) = 0$ for all $S \in \G_n$. The next sections
identify exactly when this holds and the first case where it fails. But first, we examine the signed-sum set proposed by~\cite{gatti2023}.

\section{The signed-sum set}\label{sec:signed-sum}

It is useful to separate two different enlargements of \(K_S\). Let
\(S\in\G_n\) have right anti-diagonal \((e_1,\ldots,e_{n-1})\), and define
the \emph{signed-sum set} originally defined by~\cite{gatti2023} to be 
\[
S_{\pm}
:=
\left\{
s_n+\epsilon_1 e_1+\cdots+\epsilon_{n-1}e_{n-1}+\epsilon_n
:
\epsilon_1,\ldots,\epsilon_n\in\{\pm1\}
\right\}.
\]
This is the set obtained by treating all signs in the unfolded expression
as independent.

Every valid extension lies in this signed-sum set. Indeed, if
\(k\in K_S\), then the chain
\[
r_0=|k-s_n|,\qquad r_i=|r_{i-1}-e_i|
\]
ends at \(r_{n-1}=1\). Unfolding the absolute values along this actual
chain determines a consistent choice of signs, and hence expresses \(k\)
as an element of \(S_{\pm}\). Thus
\[
K_S\subseteq S_{\pm}.
\]
On the other hand, every element of \(S_{\pm}\) has the correct parity
and lies within distance \(A(S)+1\) of \(s_n\), so
\[
S_{\pm}\subseteq C_S.
\]
Therefore
\[
K_S\subseteq S_{\pm}\subseteq C_S.
\]

Both containments can be strict, and they fail for different reasons.
The containment \(S_{\pm}\subseteq C_S\) may be strict because signed
sums need not realize every parity-compatible value in the interval.
The containment \(K_S\subseteq S_{\pm}\) may be strict because an
arbitrary independent choice of signs need not be consistent with the
intermediate values in the nested absolute-value recurrence.

\begin{example}[The signed sums need not fill the candidate interval]\label{ex:signed-sums-miss}
Let
\[
S=(2,3,5,9,15).
\]
Then the right anti-diagonal is
\[
(e_1,e_2,e_3,e_4)=(6,2,0,1),
\]
so
\[
A(S)=9
\]
and
\[
C_S=\{5,7,9,11,13,15,17,19,21,23,25\}.
\]
The signed-sum offsets from \(s_n=15\) are
\[
\pm6\pm2\pm0\pm1\pm1.
\]
These produce
\[
\{-10,-8,-6,-4,-2,2,4,6,8,10\},
\]
but not \(0\). Hence
\[
S_{\pm}
=
\{5,7,9,11,13,17,19,21,23,25\}
=
C_S\setminus\{15\}.
\]
In this example the signed-sum set coincides with the true valid-extension
set:
\[
S_{\pm}=K_S,
\]
but it does not coincide with the full candidate interval \(C_S\).
Thus the interval-filling conclusion does not follow merely from the
existence of a signed-sum expression.
\end{example}

\begin{example}[The signed sums can contain invalid extensions]\label{ex:signs-contain-invalid}
Let
\[
S=(2,3,5,9,17,19).
\]
The right anti-diagonal is
\[
(e_1,e_2,e_3,e_4,e_5)=(2,6,2,0,1),
\]
so \(A(S)=11\), and the candidate set is
\[
C_S=\{7,9,11,13,15,17,19,21,23,25,27,29,31\}.
\]
In this case the signed sums do fill the whole candidate interval:
\[
S_{\pm}=C_S.
\]
However, the true valid-extension set is smaller:
\[
K_S=\{7,9,11,13,15,19,23,25,27,29,31\}.
\]
Thus
\[
S_{\pm}\setminus K_S=\{17,21\}.
\]
The values \(17\) and \(21\) arise from some independent choices of
signs, but those choices are not compatible with the nested
absolute-value recurrence. Therefore the signed-sum set can contain
false positives even when it fills the entire candidate interval.
\end{example}

These examples show that the signed-sum set and the candidate interval
play different roles. The signed-sum set \(S_{\pm}\) is an intermediate
superset of the true valid-extension set \(K_S\), while the candidate
interval \(C_S\) is a still coarser parity-and-size bound. The reverse-tree
algorithm in Section~\ref{sec:reverse} computes \(K_S\) exactly by enforcing the missing consistency conditions.

\section{Signed sums and subset-sum completeness}\label{sec:subset-sum}

The signed-sum set \(S_{\pm}\) has a useful reformulation in the
language of subset sums. This reformulation explains exactly which part
of the extension problem is classical and which part is new. The equality
\(S_{\pm}=C_S\) is a standard complete-sequence question: do the subset
sums of a certain multiset fill the entire interval from \(0\) to their
total sum? The equality \(K_S=C_S\), by contrast, is more rigid: it asks
for the same interval-filling phenomenon under the fixed order imposed by
the nested absolute values.

For a finite multiset \(W=\{w_1,\ldots,w_m\}\) of nonnegative integers,
write
\[
\Sigma(W):=
\left\{\sum_{i\in I}w_i:I\subseteq\{1,\ldots,m\}\right\}
\]
for its set of subset sums, with multiplicities respected. We use the
following classical criterion of Brown~\cite{brown1961}. Zero weights do
not affect \(\Sigma(W)\), so the usual positive-sequence form of
Brown's criterion applies after deleting zeros; we state the equivalent
nonnegative multiset form. Related complete-sequence questions,
including Ramsey-type versions originating with Burr and Erd\H{o}s,
remain active in additive and combinatorial number theory~\cite{burr-erdos,conlon-fox-pham}.

\begin{theorem}[Classical completeness criterion]\label{thm:complete-classical}
Let \(W=\{w_1,\ldots,w_m\}\) be a finite multiset of nonnegative
integers, listed in nondecreasing order
\[
0\leq w_1\leq w_2\leq\cdots\leq w_m,
\]
and let \(T=w_1+\cdots+w_m\). Then
\[
\Sigma(W)=\{0,1,\ldots,T\}
\]
if and only if
\[
w_j\leq 1+\sum_{i<j}w_i
\qquad\text{for every }1\leq j\leq m.
\]
A multiset satisfying this condition will be called \emph{complete}.
\end{theorem}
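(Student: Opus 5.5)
We prove the two implications separately. Throughout, $T_j:=w_1+\cdots+w_j$ denotes the $j$-th partial sum, with $T_0=0$. The key intermediate statement is that, under the condition, $\Sigma(\{w_1,\ldots,w_j\})=\{0,1,\ldots,T_j\}$ for every $j$. We establish this by induction on $j$, and the case $j=m$ is the theorem.

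\textbf{Sufficiency.} The base case $j=0$ is $\Sigma(\emptyset)=\{0\}$.

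For the inductive step, observe that
\[
\Sigma(\{w_1,\ldots,w_j\})=\Sigma_{j-1}\cup(\Sigma_{j-1}+w_j),
\]
where $\Sigma_{j-1}=\{0,\ldots,T_{j-1}\}$ by the induction hypothesis. The translate $\Sigma_{j-1}+w_j$ is the interval $\{w_j,\ldots,T_j\}$. The two intervals leave no gap between them exactly when $w_j\le T_{j-1}+1$, which is the hypothesis. Their union is therefore $\{0,\ldots,T_j\}$.

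Zero weights are harmless here. They satisfy the condition trivially and leave $\Sigma$ unchanged.

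\textbf{Necessity.} We argue by contrapositive. Suppose some index $j$ has $w_j>1+T_{j-1}$. We claim the integer $x:=T_{j-1}+1$ is not a subset sum, and we note that $x\le T_{j-1}+w_j\le T$, so $x$ lies in $\{0,\ldots,T\}$.

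To prove the claim, take any subset $I$. There are two cases.
\begin{itemize}
\item If $I$ contains some index $i\ge j$, then by monotonicity $w_i\ge w_j>x$, so the sum over $I$ exceeds $x$.
\item Otherwise $I\subseteq\{1,\ldots,j-1\}$, so the sum over $I$ is at most $T_{j-1}<x$.
\end{itemize}
Either way the sum is not $x$, so $x\notin\Sigma(W)$ and $W$ is not complete.

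\textbf{Main obstacle.} The only delicate point is the necessity direction, and specifically the use of the nondecreasing order. A weight $w_i$ with $i>j$ could only fill the gap at $x$ if it were smaller than $w_j$, and sorting rules this out. This is precisely why Theorem~\ref{thm:complete-classical} is stated for sorted weights, in contrast with the order-sensitive criterion for $K_S$ proved later.
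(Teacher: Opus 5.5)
Your proposal is correct and matches the paper's proof essentially step for step: for sufficiency you induct on the partial sums, merging the intervals $\{0,\ldots,T_{j-1}\}$ and $\{w_j,\ldots,T_j\}$, and for necessity you show that $T_{j-1}+1$ cannot be a subset sum. You also check explicitly that $T_{j-1}+1\le T$, so the missing value really lies in $\{0,\ldots,T\}$; the paper leaves that detail implicit.
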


\begin{proof}
Suppose first that the displayed inequalities hold. We prove by
induction on \(j\) that the subset sums of \(\{w_1,\ldots,w_j\}\) fill
\(\{0,1,\ldots,w_1+\cdots+w_j\}\). The case \(j=0\) is trivial. If the
claim holds through \(j-1\), set \(T_{j-1}=w_1+\cdots+w_{j-1}\). After
adding \(w_j\), the subset sums are the union of
\[
\{0,1,\ldots,T_{j-1}\}
\quad\text{and}\quad
\{w_j,w_j+1,\ldots,w_j+T_{j-1}\}.
\]
The inequality \(w_j\leq T_{j-1}+1\) says exactly that these two
intervals overlap or touch, so their union is the full interval
\(\{0,1,\ldots,T_{j-1}+w_j\}\).

Conversely, suppose \(\Sigma(W)=\{0,1,\ldots,T\}\). If the inequality
failed for some \(j\), then
\[
w_j>1+\sum_{i<j}w_i.
\]
The integer \(1+\sum_{i<j}w_i\) could not be represented as a subset
sum: using only weights before \(w_j\) gives at most \(\sum_{i<j}w_i\),
while using \(w_j\) or any later weight gives at least \(w_j\). This
contradicts completeness. Hence all the inequalities hold.
\end{proof}

Given \(S\in\G_n\) with right anti-diagonal
\((e_1,\ldots,e_{n-1})\), define the associated \emph{weight multiset}
\[
W_S:=\{e_1,e_2,\ldots,e_{n-1},1\}.
\]
Let \(B:=A(S)+1\). Since \(e_{n-1}=1\), the value \(1\) appears in
\(W_S\) at least twice, and
\[
\sum_{w\in W_S}w=B.
\]

\begin{theorem}[Subset-sum reformulation]\label{thm:subset-reformulation}
Let \(S\in\G_n\), and write \(B=A(S)+1\). Then
\[
S_{\pm}=\{s_n-B+2t:t\in\Sigma(W_S)\}.
\]
Consequently,
\[
S_{\pm}=C_S
\quad\Longleftrightarrow\quad
\Sigma(W_S)=\{0,1,\ldots,B\}.
\]
\end{theorem}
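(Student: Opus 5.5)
The proof is a direct change of variables that turns sign vectors into subset indicators. Index the weights as $w_1=e_1,\ldots,w_{n-1}=e_{n-1}$ and $w_n=1$, so that $W_S=\{w_1,\ldots,w_n\}$ and $\sum_i w_i=B$. Every element of $S_{\pm}$ has the form $s_n+\sum_{i=1}^n\epsilon_i w_i$ with $\epsilon_i\in\{\pm1\}$. Given a sign vector, let $I=\{i:\epsilon_i=+1\}$. Then
\[
\sum_{i=1}^n\epsilon_i w_i=\sum_{i\in I}w_i-\sum_{i\notin I}w_i=2\sum_{i\in I}w_i-B .
\]
The map $\epsilon\mapsto I$ is a bijection from $\{\pm1\}^n$ onto the subsets of $\{1,\ldots,n\}$. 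Hence the set of offsets $\sum\epsilon_iw_i$ equals $\{2t-B:t\in\Sigma(W_S)\}$, which gives $S_{\pm}=\{s_n-B+2t:t\in\Sigma(W_S)\}$. Multiplicities cause no trouble, because $\Sigma$ is defined over index subsets, exactly as the signs are indexed by position.

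For the equivalence, I will first rewrite $C_S$ in the same affine form. By Lemma~\ref{lem:parity}, $A(S)$ is odd, so $B=A(S)+1$ is even. Therefore $k\equiv s_n\pmod 2$ with $|k-s_n|\le B$ holds exactly when $k=s_n-B+2t$ for some integer $t$ with $0\le t\le B$. In other words,
\[
C_S=\{s_n-B+2t:t\in\{0,1,\ldots,B\}\}.
\]
This is consistent with the count $|C_S|=B+1=A(S)+2$ from Lemma~\ref{lem:CS-count}.

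The map $t\mapsto s_n-B+2t$ is injective on $\Z$. Applying it to both descriptions, $S_{\pm}=C_S$ holds if and only if $\Sigma(W_S)=\{0,1,\ldots,B\}$. This uses only the trivial inclusion $\Sigma(W_S)\subseteq\{0,\ldots,B\}$, which holds because the weights are nonnegative with total $B$.

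There is no real obstacle here. The only point needing care is the parity step: $C_S$ has step size $2$ starting at $s_n-B$, and that is why $B$ must be even. Lemma~\ref{lem:parity} supplies this by showing that $e_1,\ldots,e_{n-2}$ are even and $e_{n-1}=1$.
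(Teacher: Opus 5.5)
Your proposal is correct and follows essentially the same route as the paper: the substitution $\epsilon_i=2\delta_i-1$ (your $I=\{i:\epsilon_i=+1\}$) rewrites each signed sum as $s_n-B+2t$ with $t\in\Sigma(W_S)$. The equivalence then follows by writing $C_S$ as the image of $\{0,1,\ldots,B\}$ under the injective map $t\mapsto s_n-B+2t$. Your explicit use of Lemma~\ref{lem:parity} to show that $B$ is even fills in a step the paper leaves implicit.
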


\begin{proof}
Write each \(\epsilon_i\in\{-1,+1\}\) uniquely as
\(\epsilon_i=2\delta_i-1\), with \(\delta_i\in\{0,1\}\). Substituting
into the definition of \(S_{\pm}\),
\begin{align*}
s_n+\sum_{i=1}^{n-1}\epsilon_i e_i+\epsilon_n
&=s_n+\sum_{i=1}^{n-1}(2\delta_i-1)e_i+(2\delta_n-1)\\
&=s_n+2\left(\sum_{i=1}^{n-1}\delta_i e_i+\delta_n\right)
-\left(\sum_{i=1}^{n-1}e_i+1\right)\\
&=s_n-B+2t,
\end{align*}
where \(t=\sum_{i=1}^{n-1}\delta_i e_i+\delta_n\) is a subset sum of
\(W_S\). As the \(\delta_i\) range independently over \(\{0,1\}\), the
value \(t\) ranges over \(\Sigma(W_S)\). This proves the first identity.

For the equivalence, note that
\[
C_S=\{s_n-B+2u:u\in\{0,1,\ldots,B\}\},
\]
since \(C_S\) consists of all parity-compatible values in the interval
\([s_n-B,s_n+B]\). Both \(S_{\pm}\) and \(C_S\) are images under the
injective affine map \(x\mapsto s_n-B+2x\), so they coincide if and only
if \(\Sigma(W_S)=\{0,1,\ldots,B\}\).
\end{proof}

\begin{corollary}\label{cor:signed-sum-completeness}
The equality \(S_{\pm}=C_S\) holds if and only if \(W_S\) is complete in
the sense of Theorem~\ref{thm:complete-classical}.
\end{corollary}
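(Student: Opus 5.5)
This corollary follows by combining Theorem~\ref{thm:subset-reformulation} with Theorem~\ref{thm:complete-classical}. The only point needing care is that $W_S$ is an unordered multiset, whereas Brown's criterion is stated for a nondecreasing listing.

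First, Theorem~\ref{thm:subset-reformulation} gives
\[
S_{\pm}=C_S\iff\Sigma(W_S)=\{0,1,\ldots,B\},
\qquad B=A(S)+1.
\]
We then check that $B$ is exactly the total weight of $W_S$. This was already noted just before that theorem: $\sum_{w\in W_S}w=\sum_{i=1}^{n-1}e_i+1=B$.

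Next, sort $W_S$ into nondecreasing order $0\le w_1\le\cdots\le w_m$ with $m=n$. The zero weights among the $e_i$ are allowed, since Theorem~\ref{thm:complete-classical} is stated for nonnegative multisets. Apply Theorem~\ref{thm:complete-classical} with $T=B$. It says that $\Sigma(W_S)=\{0,\ldots,B\}$ holds exactly when $w_j\le 1+\sum_{i<j}w_i$ for all $j$. That inequality is the definition of $W_S$ being complete. Since $\Sigma(W_S)$ does not depend on how the multiset is listed, sorting is harmless, and chaining the two equivalences proves the corollary.

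There is no genuine obstacle; the only subtle step is making sure completeness is read with respect to the sorted order, as the definition specifies. As a sanity check, for $S=(2,3,5,9,15)$ the sorted weights are $0,1,1,2,6$. Completeness fails at $6>1+4$, which matches Example~\ref{ex:signed-sums-miss}.
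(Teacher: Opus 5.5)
Your proposal is correct and follows the paper's route: the corollary is obtained by chaining Theorem~\ref{thm:subset-reformulation} with Brown's criterion (Theorem~\ref{thm:complete-classical}) applied to $W_S$, whose total weight is $B=A(S)+1$. Your remarks on sorting the multiset and on allowing zero weights cover exactly the bookkeeping the paper leaves implicit, and your sanity check on $(2,3,5,9,15)$ is accurate: the sorted weights $0,1,1,2,6$ fail at $6>1+4$.
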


Theorem~\ref{thm:subset-reformulation} isolates the classical part of
the problem. The signed-sum set forgets the order in which the absolute
values are evaluated: it only remembers the multiset of fold sizes. By
Brown's criterion, the question \(S_{\pm}=C_S\) is answered by sorting the
weights in \(W_S\) and checking whether each new sorted weight is at most
one plus the sum of the preceding sorted weights.

The true valid-extension set is different. A signed expression represents
a genuine element of \(K_S\) only if its signs arise from an actual chain
\[
r_i=|r_{i-1}-e_i|.
\]
This chain processes the anti-diagonal entries in their fixed geometric
order. Thus the interval-completeness condition for \(K_S\) has the same
``no gap'' shape as Brown's criterion, but the order is forced:
\[
e_i\leq 1+\sum_{j>i}e_j.
\]
In short, Brown's condition is a sorted subset-sum completeness
criterion, while Theorem~\ref{thm:criterion} below is an ordered folding
completeness criterion.

\begin{proposition}[Hierarchy of completeness conditions]\label{prop:hierarchy}
For every \(S\in\G_n\), if \(K_S=C_S\), then \(W_S\) is complete
(equivalently, \(S_{\pm}=C_S\)). The converse fails.
\end{proposition}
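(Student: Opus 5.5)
The plan is to use the sandwich $K_S\subseteq S_{\pm}\subseteq C_S$ from Section~\ref{sec:signed-sum}, and to read off the converse failure from Example~\ref{ex:signs-contain-invalid}. This avoids relying on Theorem~\ref{thm:criterion}, which is stated later.

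\textbf{Forward direction.} Suppose $K_S=C_S$. From $K_S\subseteq S_{\pm}\subseteq C_S=K_S$ we obtain $S_{\pm}=C_S$. By Corollary~\ref{cor:signed-sum-completeness}, which rests on Theorem~\ref{thm:subset-reformulation} together with Brown's criterion (Theorem~\ref{thm:complete-classical}), this is equivalent to $W_S$ being complete.

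The only ingredient here that needs care is the containment $K_S\subseteq S_{\pm}$. I would make it explicit rather than cite the informal remark. Take $k\in K_S$ and write $d=|k-s_n|$, so that $k=s_n+\epsilon_0 d$ with $\epsilon_0=\pm1$ (for $d=0$ either sign works). Each step $r_i=|r_{i-1}-e_i|$ gives $r_{i-1}=e_i+\sigma_i r_i$ for some sign $\sigma_i$. Since $r_{n-1}=1$, unrolling yields
\[
d=e_1\pm e_2\pm\cdots\pm e_{n-1}\pm 1 .
\]
Multiplying by $\epsilon_0$ exhibits $k$ as an element of $S_{\pm}$ with all $n$ signs in $\{\pm1\}$. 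For the other containment, $S_{\pm}\subseteq C_S$: every signed sum is bounded in absolute value by $A(S)+1$, and has the parity of $s_n$ because $e_1,\ldots,e_{n-2}$ are even by Lemma~\ref{lem:parity} and $e_{n-1}\pm1$ is even.

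\textbf{Converse fails.} Take $S=(2,3,5,9,17,19)$ from Example~\ref{ex:signs-contain-invalid}. First I would verify $S\in\G_n$ by computing its difference triangle and checking that the left edge is all $1$'s. The anti-diagonal is $(2,6,2,0,1)$, so $W_S=\{0,1,1,2,2,6\}$ with $B=12$. Sorted, Brown's inequalities read $0\le1$, $1\le1$, $1\le2$, $2\le3$, $2\le5$, $6\le7$, so $W_S$ is complete.

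It then remains to show $K_S\neq C_S$ by exhibiting one hole, namely $k=17$. Here $d=2$, and the chain is
\[
r_1=|2-2|=0,\quad r_2=|0-6|=6,\quad r_3=|6-2|=4,\quad r_4=|4-0|=4,\quad r_5=|4-1|=3\neq1 .
\]
By Proposition~\ref{prop:criterion}, $17\notin K_S$.

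The only real obstacle is these routine verifications: confirming the triangle of $(2,3,5,9,17,19)$ and carrying out the sorted Brown check. Both are short finite computations.
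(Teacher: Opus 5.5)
Your proposal is correct and follows the paper's proof. The forward direction is the same sandwich $K_S\subseteq S_{\pm}\subseteq C_S$ combined with Corollary~\ref{cor:signed-sum-completeness}, and for the converse you use the same sequence $(2,3,5,9,17,19)$ with the same sorted Brown check. The only difference is how you certify $K_S\neq C_S$: you run the chain for $k=17$ directly (obtaining $r_5=3$) and apply Proposition~\ref{prop:criterion}, whereas the paper cites the failure of Theorem~\ref{thm:criterion} at $i=2$ together with the example's computation $K_S=C_S\setminus\{17,21\}$. Your version has the small advantage of avoiding that forward reference.
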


\begin{proof}
If \(K_S=C_S\), then the inclusion chain
\(K_S\subseteq S_{\pm}\subseteq C_S\) forces \(S_{\pm}=C_S\). By
Corollary~\ref{cor:signed-sum-completeness}, this is equivalent to
completeness of \(W_S\).

For the converse, take
\[
S=(2,3,5,9,17,19),
\]
as in Example~\ref{ex:signs-contain-invalid}. Its right anti-diagonal is
\[
(e_1,e_2,e_3,e_4,e_5)=(2,6,2,0,1),
\]
so
\[
W_S=\{2,6,2,0,1,1\}.
\]
Sorted, this is \((0,1,1,2,2,6)\). The cumulative sums are
\[
0,1,2,4,6,12,
\]
and each entry is at most one plus the sum of the preceding entries.
Thus \(W_S\) is complete, so \(S_{\pm}=C_S\). However, the ordered
criterion of Theorem~\ref{thm:criterion} fails at \(i=2\), since
\[
e_2=6>1+e_3+e_4+e_5=1+2+0+1=4.
\]
Therefore \(K_S\neq C_S\). Indeed, Example~\ref{ex:signs-contain-invalid}
computes \(S_{\pm}=C_S\) but \(K_S=C_S\setminus\{17,21\}\).
\end{proof}

The classical completeness criterion governs when the signed sums fill
the candidate interval. The ordered interval-completeness criterion
governs when the true valid-extension set fills the interval. The gap
between the two captures the consistency required by the nested
absolute-value structure: elements of \(S_{\pm}\setminus K_S\) are exactly
values produced by independent sign choices that cannot occur along any
actual folding chain.

\section{Endpoint validity and symmetry}\label{sec:endpoints}

\begin{theorem}[Endpoint validity]\label{thm:endpoints}
For every $S \in \G_n$,
\[
s_n - A(S) - 1 \in K_S
\quad \text{and} \quad
s_n + A(S) + 1 \in K_S.
\]
\end{theorem}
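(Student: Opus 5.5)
Both endpoints satisfy $|k-s_n|=A(S)+1$. By Proposition~\ref{prop:criterion}, membership in $K_S$ depends only on $d=|k-s_n|$. So it suffices to prove the single identity
\[
F_S(A(S)+1)=1,
\]
and then apply Proposition~\ref{prop:criterion} to both $k=s_n-A(S)-1$ and $k=s_n+A(S)+1$.

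To compute $F_S(A(S)+1)$ I will reuse the no-sign-flip induction from the proof of Corollary~\ref{cor:candidate}, this time at the boundary value. Set $r_0=A(S)+1$. The inductive claim is that, for $0\le i\le n-1$,
\[
r_i = 1+\sum_{j>i} e_j .
\]
For $i=0$ this is just the definition of $A(S)$. For the inductive step, assume $r_{i-1}=1+e_i+e_{i+1}+\cdots+e_{n-1}$. The entries $e_j$ are nonnegative, being absolute differences, so $r_{i-1}\ge 1+e_i> e_i$. Hence the absolute value in $r_i=|r_{i-1}-e_i|$ is resolved with the plus sign, giving $r_i=1+\sum_{j>i}e_j$. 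At $i=n-1$ the sum is empty, so $r_{n-1}=1$, which is the required identity.

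I do not expect a real obstacle. The only point needing care is that the strict inequality used in Corollary~\ref{cor:candidate} becomes the non-strict $r_{i-1}\ge e_i$ here, and that is enough for the fold to act as plain subtraction. The extra $+1$ in the radius is exactly what guarantees this at every stage, including the last step, where $r_{n-2}=1+e_{n-1}=2$ and $e_{n-1}=1$. No parity input (Lemma~\ref{lem:parity}) is needed, though it confirms that both endpoints lie in $C_S$.
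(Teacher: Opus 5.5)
Your proof is correct and is essentially the paper's own argument. Both set $d=A(S)+1$, show by induction that $r_i=1+\sum_{j>i}e_j$ with no sign flip because the leftover $1+\sum_{j>i}e_j$ is positive, conclude $r_{n-1}=1$, and then use that membership depends only on $|k-s_n|$ to cover both endpoints via Proposition~\ref{prop:criterion}.
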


\begin{proof}
Take $d = A(S) + 1 = e_1 + e_2 + \cdots + e_{n-1} + 1$. We prove by
induction on $i$ that
\[
r_i = e_{i+1} + e_{i+2} + \cdots + e_{n-1} + 1
\qquad (0 \leq i \leq n - 1),
\]
where the empty sum (at $i = n-1$) is $0$. For $i = 0$ this is the
definition of $d$. Assuming the formula for $r_{i-1}$, we have
\[
r_{i-1} = e_i + \big(e_{i+1} + \cdots + e_{n-1} + 1\big) > e_i,
\]
since the bracketed remainder is at least $1$. Hence no sign flip
occurs and
\[
r_i = |r_{i-1} - e_i| = r_{i-1} - e_i = e_{i+1} + \cdots + e_{n-1} + 1.
\]
At $i=n-1$ this gives \(r_{n-1}=1\), so both
\(s_n+(A(S)+1)\) and \(s_n-(A(S)+1)\) lie in \(K_S\).
\end{proof}

\begin{theorem}[Reflection symmetry]\label{thm:symmetry}
For every $S \in \G_n$ and every $k \in \Z$,
$k \in K_S$ iff $2 s_n - k \in K_S$. Hence $K_S$, $C_S$, and $H_S$ are
symmetric about $s_n$.
\end{theorem}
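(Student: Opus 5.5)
The plan is to read the statement straight off Proposition~\ref{prop:criterion}. The criterion says that membership of \(k\) in \(K_S\) depends on \(k\) only through the distance \(d=|k-s_n|\), since \(k\in K_S\iff F_S(|k-s_n|)=1\). The reflection \(k\mapsto 2s_n-k\) preserves this distance:
\[
|(2s_n-k)-s_n| = |s_n-k| = |k-s_n|.
\]
Hence \(F_S(|k-s_n|)=F_S(|(2s_n-k)-s_n|)\). By Proposition~\ref{prop:criterion}, this gives \(k\in K_S\) if and only if \(2s_n-k\in K_S\), which is the first claim.

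For the second claim, I will treat the three sets one at a time.
\begin{itemize}
\item For \(K_S\), the first claim is exactly symmetry about \(s_n\).
\item For \(C_S\), symmetry is immediate from the definition. The condition \(|k-s_n|\le A(S)+1\) depends only on \(|k-s_n|\). The parity condition is also preserved, because \((2s_n-k)-s_n=-(k-s_n)\) has the same parity as \(k-s_n\).
\item For \(H_S=C_S\setminus K_S\), I will use that the reflection \(\rho(k)=2s_n-k\) is an involution of \(\Z\). Since \(\rho\) maps \(C_S\) onto itself and \(K_S\) onto itself, it maps their difference onto itself.
\end{itemize}

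There is no real obstacle here. The only point needing care is to state clearly that the criterion depends on \(k\) only through \(|k-s_n|\). This dependence is already built into the definition \(r_0=|k-s_n|\), and every later \(r_i\) is computed from \(r_0\) and the fixed anti-diagonal entries \(e_i\), none of which depend on \(k\).
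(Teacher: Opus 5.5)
Your proposal is correct and follows the paper's proof: membership in \(K_S\) depends on \(k\) only through \(|k-s_n|\) (Proposition~\ref{prop:criterion}), and this quantity is invariant under \(k\mapsto 2s_n-k\). Your explicit check that \(C_S\), and hence \(H_S=C_S\setminus K_S\), is also preserved by the reflection spells out a step the paper leaves implicit.
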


\begin{proof}
$k \in K_S$ depends on $k$ only through $|k - s_n|$, which is invariant
under $k \mapsto 2 s_n - k$.
\end{proof}

\subsection{Shift-invariance}\label{sec:shift}

For any integer $c$, the map $S \mapsto S + c$ preserves the entire
difference triangle below row $0$, hence the Gilbreath property and the
anti-diagonal. The correspondence $k \leftrightarrow k + c$ gives a
bijection $K_S \to K_{S+c}$. Consequently $|K_S|$, $|C_S|$, $h(S)$,
the component count, and other purely combinatorial invariants depend
only on the gap sequence and are independent of $s_1$. All results
extend verbatim to the shifted family with initial pair $(a, a+1)$ for
any integer $a$.

\section{A reverse-tree algorithm}\label{sec:reverse}

The iterated absolute-value criterion yields a backward algorithm for
computing $K_S$. Instead of starting with a proposed extension $k$ and
pushing the distance $|k-s_n|$ downward through the absolute values, we
start at the required final value $1$ and compute all possible previous
values.

In Section~\ref{sec:criterion-section}, we viewed the valid distance set
as the fiber \(F_S^{-1}(\{1\})\). The reverse-tree algorithm computes
this fiber by inverting the folds one at a time.

\begin{definition}
For $e \in \Z_{\geq 0}$ and $T \subseteq \Z_{\geq 0}$, define the
\emph{preimage step}
\[
P_e(T) := \{e + t : t \in T\} \cup \{e - t : t \in T,\ e \geq t\}.
\]
\end{definition}

This is exactly the set of nonnegative solutions $x$ to equations of the
form $|x-e|=t$ with $t\in T$. The branch $x=e+t$ is always allowed,
whereas the branch $x=e-t$ is allowed only when $e\geq t$. Thus the
second branch is not $|e-t|$ in general.

\begin{proposition}[Reverse-tree characterization]\label{prop:reverse}
Let $T_{n-1} := \{1\}$ and recursively $T_{i-1} := P_{e_i}(T_i)$ for
$i = n - 1, n - 2, \ldots, 1$. Then
\[
D_S := \{|k - s_n| : k \in K_S\} = T_0,
\]
and $K_S = \{s_n + d : d \in D_S\} \cup \{s_n - d : d \in D_S\}$.
\end{proposition}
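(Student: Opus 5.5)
The plan is to prove, by downward induction on $i$, the stronger statement that $T_i$ is exactly the fiber of the partial folding map from level $i$ down to level $n-1$. For $0\le i\le n-1$ define $G_i:\Z_{\ge 0}\to\Z_{\ge 0}$ by
\[
G_{n-1}(x)=x,\qquad G_{i-1}(x)=G_i\bigl(|x-e_i|\bigr).
\]
Unwinding the recursion gives
\[
G_i(x)=\bigl|\cdots\bigl||x-e_{i+1}|-e_{i+2}\bigr|\cdots-e_{n-1}\bigr|,
\]
so $G_0=F_S$. In terms of the chain $r_0,r_1,\ldots$ of Section~\ref{sec:criterion-section}, $G_i(x)$ is the value $r_{n-1}$ obtained when the chain is started at $r_i=x$. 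The claim to prove is
\[
T_i=\{x\in\Z_{\ge 0}:G_i(x)=1\}\qquad\text{for all } 0\le i\le n-1.
\]

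The base case $i=n-1$ holds by definition, since $G_{n-1}$ is the identity and $T_{n-1}=\{1\}$.

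For the inductive step, the key fact is elementary: for $e,t\in\Z_{\ge 0}$, the nonnegative solutions of $|x-e|=t$ are exactly $x=e+t$ together with $x=e-t$ when $e\ge t$. This follows by splitting into the cases $x\ge e$ and $x<e$. It is precisely the observation recorded after the definition of $P_e$. Hence, for any $T\subseteq\Z_{\ge 0}$,
\[
P_e(T)=\{x\ge 0:|x-e|\in T\}.
\]
Now suppose $T_i=G_i^{-1}(1)$. Then
\[
T_{i-1}=P_{e_i}(T_i)=\{x\ge 0:|x-e_i|\in G_i^{-1}(1)\}=\{x\ge 0:G_i(|x-e_i|)=1\}=G_{i-1}^{-1}(1),
\]
which completes the induction.

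Taking $i=0$ gives $T_0=F_S^{-1}(1)$. By Proposition~\ref{prop:criterion}, $k\in K_S$ if and only if $F_S(|k-s_n|)=1$. So the set $\{|k-s_n|:k\in K_S\}$ is contained in $F_S^{-1}(1)$. Conversely, every $d\in F_S^{-1}(1)$ is realized, for instance by $k=s_n+d$. Therefore $D_S=T_0$.

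For the final description of $K_S$: if $k\in K_S$, then $k=s_n\pm d$ with $d=|k-s_n|\in D_S$. Conversely, for $d\in D_S$ both $s_n+d$ and $s_n-d$ have distance $d$ from $s_n$, so both lie in $K_S$, either by Proposition~\ref{prop:criterion} directly or by Theorem~\ref{thm:symmetry}.

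No step here is a genuine obstacle. The only point needing care is the one-sided branch condition $e\ge t$ in $P_e$, which ensures that preimages remain nonnegative and that no spurious values such as $|e-t|$ enter the set. The rest is bookkeeping of the order of composition, so that $P_{e_i}$ is applied in the reverse of the order in which the folds $e_i$ appear in $F_S$.
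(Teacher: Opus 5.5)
Your proof is correct and takes essentially the same approach as the paper: both invert the folds $x\mapsto|x-e_i|$ one at a time starting from the apex value $1$, using the fact that the nonnegative solutions of $|x-e|=t$ are $e+t$ and, when $e\ge t$, $e-t$. Your downward induction $T_i=G_i^{-1}(1)$ makes explicit the ``iterating'' step that the paper's chain-based argument leaves implicit, and you also justify the final $s_n\pm d$ description of $K_S$, which the paper states without proof.
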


\begin{proof}
The condition $d = |k - s_n| \in D_S$ is equivalent to: there exists a
nonnegative chain $r_0 = d, r_1, \ldots, r_{n-1}$ with
$r_i = |r_{i-1} - e_i|$ and $r_{n-1} = 1$. Working backward, if
$r_i = t$ then the possible values of $r_{i-1} \geq 0$ are $e_i + t$
(always) and $e_i - t$ (only when $e_i \geq t$), so the possible
values form exactly $P_{e_i}(\{t\})$. Iterating gives $T_0 = D_S$.
\end{proof}

\begin{example}[Reverse tree for $S=(2,3,5,9,15)$]
Let $S=(2,3,5,9,15)$. Its right anti-diagonal is
$(e_1,e_2,e_3,e_4)=(6,2,0,1)$.
We begin from the required final value $T_4=\{1\}$ and move upward
through the anti-diagonal:
\[
T_3=P_1(\{1\})=\{0,2\}, \quad
T_2=P_0(\{0,2\})=\{0,2\},
\]
since the lower branch $0-2$ is not allowed,
\[
T_1=P_2(\{0,2\})=\{0,2,4\}, \quad
T_0=P_6(\{0,2,4\})=\{2,4,6,8,10\}.
\]
Thus $D_S=T_0=\{2,4,6,8,10\}$, and reflecting these distances around
$s_n=15$ gives
\[
K_S = \{15\pm d : d\in D_S\} = \{5,7,9,11,13,17,19,21,23,25\}.
\]
This process is visualized in the figure below.
\end{example}

\begin{figure}[htbp]
\centering
\[
\begin{array}{ccccc@{\qquad}l}
2 & 3 & 5 & 9 & 15 & \{5,7,9,11,13,17,19,21,23,25\} \\[1.2em]
  & 1 & 2 & 4 & 6  & \{2,4,6,8,10\} \\[1.2em]
  &   & 1 & 2 & 2  & \{0,2,4\} \\[1.2em]
  &   &   & 1 & 0  & \{0,2\} \\[1.2em]
  &   &   &   & 1  & \{0,2\} \\[1.2em]
  &   &   &   &    & 1
\end{array}
\]
\caption{The reverse-tree process on the sequence $(2,3,5,9,15)$.}
\label{fig:reverse-tree-235915}
\end{figure}

\section{Interval-complete sequences}\label{sec:criterion-theorem}

We now characterize exactly when $K_S$ equals the full candidate
interval $C_S$. By Corollary~\ref{cor:signed-sum-completeness}, the
weaker equality $S_{\pm}=C_S$ is controlled by the classical
subset-sum completeness of the sorted multiset $W_S$. The equality
$K_S=C_S$ is more rigid. It requires the independently signed expression
to be compatible with the ordered folding recurrence
\(r_i=|r_{i-1}-e_i|\), or equivalently with the fiber condition
\(F_S(d)=1\). The criterion below is therefore an ordered folding
analogue of Brown's completeness criterion. Figure~\ref{fig:criterion-mech}
illustrates the local mechanism that underlies the criterion.

\begin{figure}[htbp]
\centering
\begin{tikzpicture}[scale=0.6, every node/.style={font=\footnotesize}]
\node[anchor=west, font=\small] at (-0.5, 3.5) {\textbf{(a)} $a \le L$: $Q_a(\widetilde{T}) = \{0, 1, \ldots, a+L\}$ is the full interval.};
\draw[->, thick] (-0.5, 0) -- (11, 0);
\draw (0, 0.12) -- (0, -0.12); \node[below=2pt] at (0, 0) {$0$};
\draw (3, 0.12) -- (3, -0.12); \node[below=2pt] at (3, 0) {$a$};
\draw (6, 0.12) -- (6, -0.12); \node[below=2pt] at (6, 0) {$L$};
\draw (9, 0.12) -- (9, -0.12); \node[below=2pt] at (9, 0) {$a+L$};
\draw[red!70!black, line width=2.5pt] (0, 0.9) -- (3, 0.9);
\node[anchor=west, font=\scriptsize, color=red!70!black] at (3.2, 0.9) {lower: $a - \widetilde{T} = [0, a]$};
\draw[blue!70!black, line width=2.5pt] (3, 1.6) -- (9, 1.6);
\node[anchor=west, font=\scriptsize, color=blue!70!black] at (9.2, 1.6) {upper: $a + \widetilde{T} = [a, a+L]$};
\draw[green!50!black, line width=3pt] (0, 2.3) -- (9, 2.3);
\node[anchor=west, font=\scriptsize, color=green!50!black] at (9.2, 2.3) {$Q_a(\widetilde{T}) = \{0, \ldots, a+L\}$};

\begin{scope}[yshift=-5.5cm]
\node[anchor=west, font=\small] at (-0.5, 3.5) {\textbf{(b)} $a > L$: lower branch starts at $a - L > 0$, leaving a gap.};
\draw[->, thick] (-0.5, 0) -- (12.5, 0);
\draw (0, 0.12) -- (0, -0.12); \node[below=2pt] at (0, 0) {$0$};
\draw (5, 0.12) -- (5, -0.12); \node[below=2pt] at (5, 0) {$a - L$};
\draw (8, 0.12) -- (8, -0.12); \node[below=2pt] at (8, 0) {$a$};
\draw (11, 0.12) -- (11, -0.12); \node[below=2pt] at (11, 0) {$a + L$};
\draw[red!70!black, line width=2.5pt] (5, 0.9) -- (8, 0.9);
\node[anchor=west, font=\scriptsize, color=red!70!black] at (8.2, 0.9) {lower: $a - \widetilde{T} = [a-L, a]$};
\draw[blue!70!black, line width=2.5pt] (8, 1.6) -- (11, 1.6);
\node[anchor=west, font=\scriptsize, color=blue!70!black] at (11.2, 1.6) {upper: $a + \widetilde{T} = [a, a+L]$};
\draw[orange!80!black, line width=3pt, dotted] (0, 2.3) -- (4.8, 2.3);
\node[anchor=west, font=\scriptsize, color=orange!80!black] at (5, 2.3) {missing: $\{0, \ldots, a-L-1\}$};
\end{scope}
\end{tikzpicture}
\caption{The mechanism behind the interval-completeness criterion
(Theorem~\ref{thm:criterion}). With $\widetilde{T} = \{0,1,\ldots,L\}$, the
preimage map $Q_a(\widetilde{T})$ consists of a lower branch $a - \widetilde{T}$ and an upper
branch $a + \widetilde{T}$. (a) When $a \le L$, the two branches meet at $a$ and
together cover the full integer interval $\{0,1,\ldots,a+L\}$. (b)
When $a > L$, the lower branch starts at $a - L > 0$ and the values
$\{0, 1, \ldots, a - L - 1\}$ are missing from $Q_a(\widetilde{T})$. The criterion
$e_i \le 1 + \sum_{j>i} e_j$ ensures that case~(a) occurs at every
step of the reverse tree.}
\label{fig:criterion-mech}
\end{figure}

\begin{theorem}[Interval-completeness criterion]\label{thm:criterion}
Let $S \in \G_n$ with $n \geq 2$ and right anti-diagonal
$(e_1, \ldots, e_{n-1})$. Then $K_S = C_S$ if and only if
\[
e_i \leq 1 + \sum_{j=i+1}^{n-1} e_j
\qquad \text{for every } 1 \leq i \leq n - 2.
\]
\end{theorem}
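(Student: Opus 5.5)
We work with the reverse tree of Proposition~\ref{prop:reverse} after normalizing by $2$, exactly as Figure~\ref{fig:criterion-mech} suggests. By Lemma~\ref{lem:parity}, $e_1,\ldots,e_{n-2}$ are even and $e_{n-1}=1$. Moreover $T_{n-2}=P_1(\{1\})=\{0,2\}$, and every subsequent $P_{e_i}$ with $e_i$ even preserves evenness. So all $T_i$ with $i\le n-2$ consist of even numbers. Set $a_i:=e_i/2$ for $i\le n-2$ and $\widetilde T_i:=\tfrac12 T_i$; then $\widetilde T_{n-2}=\{0,1\}$ and $\widetilde T_{i-1}=Q_{a_i}(\widetilde T_i)$, where
\[
Q_a(T)=(a+T)\cup\{a-t:t\in T,\ t\le a\}.
\]
By Theorem~\ref{thm:symmetry} and Lemma~\ref{lem:CS-count}, $K_S=C_S$ is equivalent to $D_S=T_0=\{0,2,\ldots,A(S)+1\}$. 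Since $A(S)+1=2\bigl(1+\sum_{j\le n-2}a_j\bigr)$, this is in turn equivalent to
\[
\widetilde T_0=\{0,1,\ldots,L_0\},\qquad L_i:=1+\sum_{j=i+1}^{n-2}a_j .
\]
Dividing the stated inequality by $2$ (using $e_{n-1}=1$ and the evenness of $e_i$, so that $e_i\le 1+\sum_{j>i}e_j$ is equivalent to $e_i\le 2+\sum_{i<j\le n-2}e_j$), the criterion reads $a_i\le L_i$ for $1\le i\le n-2$.

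\emph{Sufficiency.} We argue by downward induction on $i$ that $\widetilde T_i=\{0,\ldots,L_i\}$. The base case $\widetilde T_{n-2}=\{0,1\}$ holds with $L_{n-2}=1$. For the inductive step, if $\widetilde T_i=[0,L_i]$ and $a_i\le L_i$, then the upper branch of $Q_{a_i}$ gives $[a_i,a_i+L_i]$ and the lower branch gives $[0,a_i]$, because $t$ ranges over $[0,a_i]\subseteq[0,L_i]$. Their union is $[0,a_i+L_i]=[0,L_{i-1}]$, which is panel (a) of the figure.

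\emph{Necessity.} This is the main obstacle, since an early failure might in principle be repaired later; the right invariant handles it. First, $\max\widetilde T_i=L_i$ always, by the endpoint argument of Theorem~\ref{thm:endpoints}. So $K_S=C_S$ forces $\widetilde T_0$ to be the full interval $[0,L_0]$. Second, we claim that if $\widetilde T_{i-1}$ is a full interval $[0,L_{i-1}]$ for some $i\le n-2$, then $\widetilde T_i=[0,L_i]$ and $a_i\le L_i$. To see this, suppose $a_i>L_i$. Then every element of $Q_{a_i}(\widetilde T_i)$ is at least $a_i-L_i>0$, because lower-branch elements satisfy $a_i-t\ge a_i-L_i$. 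Hence $0\notin\widetilde T_{i-1}$, a contradiction. Thus $a_i\le L_i$. Next, if some $x\in[0,L_i]$ were missing from $\widetilde T_i$, then $a_i+x\in[a_i,L_{i-1}]$ must still be covered. It cannot come from the upper branch, so it comes from the lower branch, which forces $a_i+x\le a_i$, i.e.\ $x=0$ and $0\notin\widetilde T_i$. But $0\in\widetilde T_i$ is ensured inductively (for instance $0\in\widetilde T_{n-2}$, and $0\in Q_a(T)$ iff $a\in T$). A cleaner way to close this gap is to show $0\in\widetilde T_i$ directly: $0\in\widetilde T_{i-1}$ requires $a_i\in\widetilde T_i$, and then the value $a_i$ itself in $\widetilde T_{i-1}$ comes from $t=0$ or $t=2a_i$. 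We will settle this small point carefully, possibly by strengthening the induction hypothesis to ``$\widetilde T_i$ is an interval containing $0$.'' Applying the claim successively for $i=1,\dots,n-2$ propagates fullness down the tree and yields $a_i\le L_i$ for every $i$, which is the criterion.
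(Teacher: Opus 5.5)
You follow the paper's route: normalize the reverse tree by $2$, pass to $Q_a$ and $L_i$, get sufficiency from panel (a), and get necessity by propagating fullness from $\widetilde T_0$ to $\widetilde T_1,\ldots,\widetilde T_{n-2}$. The one genuine gap is the point you leave open in the necessity step, namely showing $0\in\widetilde T_i$.

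Neither justification you offer works. ``Ensured inductively'' fails because your necessity induction runs from $\widetilde T_{i-1}$ to $\widetilde T_i$, so it gives you no hypothesis about $\widetilde T_i$. Moreover, membership of $0$ does not propagate through reverse steps: $0\in Q_a(T)$ iff $a\in T$, which is a condition on $a$, not on whether $0\in T$. For $(2,3,5,9,15)$ one has $0\in T_1=\{0,2,4\}$ but $0\notin T_0$. Your second hint is also wrong: the value $a_i$ cannot arise from $t=2a_i$, since both branches produce $a_i$ only from $t=0$.

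That observation is exactly the missing line, and it is how the paper's Claim handles it. Since $0\le a_i\le a_i+L_i=L_{i-1}$, fullness gives $a_i\in\widetilde T_{i-1}=Q_{a_i}(\widetilde T_i)$. Both $a_i=a_i+t$ and $a_i=a_i-t$ force $t=0$, so $0\in\widetilde T_i$. In fact your own missing-$x$ argument already closes the gap. For $x=0$, the lower-branch representation $a_i-t=a_i$ needs $t=0=x$, the very element assumed missing, so $a_i$ would not be covered. No strengthening of the induction hypothesis is needed.

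There is also a smaller citation slip. Your step ``if $a_i>L_i$ then every element of $Q_{a_i}(\widetilde T_i)$ is at least $a_i-L_i$'' uses the containment $\widetilde T_i\subseteq\{0,\ldots,L_i\}$. That containment comes from the upper-bound induction (the paper's Boundedness paragraph, or the argument of Corollary~\ref{cor:candidate}), not from the endpoint argument. Theorem~\ref{thm:endpoints} only gives $L_i\in\widetilde T_i$.

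With these two points fixed, your proof is complete. Deducing $a_i\le L_i$ before fullness, via $0\notin Q_{a_i}(\widetilde T_i)$ when $a_i>L_i$, is just a reordering of the paper's Claim.
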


\begin{proof}
For $n = 2$ the index range $1 \leq i \leq n-2$ is empty, the condition
holds vacuously, and indeed $\G_2 = \{(2,3)\}$ has
$K_S = C_S = \{1,3,5\}$; we therefore assume $n \geq 3$.

We use the reverse-tree characterization of
Proposition~\ref{prop:reverse}. Recall $T_{n-1} = \{1\}$, and
$T_{i-1} = P_{e_i}(T_i)$ for $i = n - 1, \ldots, 1$, with
$D_S = T_0$. By Lemma~\ref{lem:parity}, $e_{n-1} = 1$ and
$e_1, \ldots, e_{n-2}$ are even. The first reverse step gives
\[
T_{n-2} = P_1(\{1\}) = \{0, 2\}.
\]

From this stage onward, all elements of $T_i$ are even, since they
arise by adding or subtracting an even $e_j$ from even values. We
normalize by dividing by $2$: write $a_i := e_i / 2$ for
$1 \leq i \leq n - 2$, and set
\[
\widetilde{T}_{n-2} := T_{n-2} / 2 = \{0, 1\},
\qquad \widetilde{T}_{i-1} := Q_{a_i}(\widetilde{T}_i),
\]
where $Q_a(\widetilde{T}) := \{a + u : u \in \widetilde{T}\} \cup \{a - u : u \in \widetilde{T},\ a \geq u\}$
is the normalized preimage map. Then $D_S = 2 \widetilde{T}_0$.

For $0 \leq i \leq n - 2$, define
$L_i := 1 + \sum_{j=i+1}^{n-2} a_j$, so $L_{n-2} = 1$ and
$L_{i-1} = a_i + L_i$. The full candidate distance set (after
normalization) corresponds to $\{0, 1, \ldots, L_0\}$: indeed,
$|C_S| = A(S) + 2$ and the candidate distances are
$\{0, 2, 4, \ldots, A(S) + 1\}$, normalizing to
$\{0, 1, \ldots, (A(S)+1)/2\} = \{0, 1, \ldots, L_0\}$ since
$L_0 = 1 + (a_1 + \cdots + a_{n-2}) = 1 + (A(S) - 1)/2 = (A(S) + 1)/2$.

Therefore $K_S = C_S$ is equivalent to $\widetilde{T}_0 = \{0, 1, \ldots, L_0\}$.
The proof reduces to the following elementary claim, whose content is
exactly Figure~\ref{fig:criterion-mech}.

\medskip
\textbf{Claim.} Let $\widetilde{T} \subseteq \{0, 1, \ldots, L\}$ and $a \geq 0$.
Then $Q_a(\widetilde{T}) = \{0, 1, \ldots, a + L\}$ if and only if
$\widetilde{T} = \{0, 1, \ldots, L\}$ and $a \leq L$.
\medskip

\textit{Proof of claim.}
$(\Leftarrow)$ Suppose $\widetilde{T} = \{0, 1, \ldots, L\}$ and $a \leq L$. Then
$\{a + u : u \in \widetilde{T}\} = \{a, a+1, \ldots, a + L\}$, and since
$a \leq L$, every $u \in \widetilde{T}$ with $u \leq a$ is in $\widetilde{T}$, so
$\{a - u : u \in \widetilde{T}, a \geq u\} = \{0, 1, \ldots, a\}$. The union is
$\{0, 1, \ldots, a + L\}$.

$(\Rightarrow)$ Suppose $Q_a(\widetilde{T})=\{0,1,\ldots,a+L\}$. Any
$v>a$ in $Q_a(\widetilde{T})$ can only arise as $v=a+u$ for some $u\in \widetilde{T}$
(since $a-u\leq a$). Hence for every $1\leq u\leq L$, the value
$a+u$ being in $Q_a(\widetilde{T})$ forces $u\in \widetilde{T}$. So
$\{1,\ldots,L\}\subseteq \widetilde{T}$. Also, $a\in Q_a(\widetilde{T})$ requires
$0\in \widetilde{T}$ (via $a+0$ or $a-0$). Thus $\widetilde{T}=\{0,1,\ldots,L\}$.

It remains to show $a\leq L$. If $a=0$, this is immediate. If
$a>0$, then the value $0\in Q_a(\widetilde{T})$ cannot arise from the upper
branch $a+u$, so it must arise from the lower branch $a-u=0$ for
some $u\in \widetilde{T}$. Hence $a=u\in \widetilde{T}$. Since
$\widetilde{T}\subseteq\{0,1,\ldots,L\}$, this forces $a\leq L$.
\hfill$\square$

\medskip

\emph{Boundedness.} Before applying the claim we record that
$\widetilde{T}_i \subseteq \{0, 1, \ldots, L_i\}$ for every $i$. This holds at
$i = n - 2$ since $\widetilde{T}_{n-2} = \{0, 1\}$ and $L_{n-2} = 1$; and if
$\widetilde{T}_i \subseteq \{0, \ldots, L_i\}$, then every element of
$Q_{a_i}(\widetilde{T}_i)$ has the form $a_i + u$ or $a_i - u$ with
$0 \leq u \leq L_i$, hence lies in $[0,\, a_i + L_i] = [0,\, L_{i-1}]$.
Thus $\widetilde{T}_{i-1} \subseteq \{0, \ldots, L_{i-1}\}$, completing the
induction.

\medskip

\emph{Sufficiency.} Suppose $e_i \leq 1 + \sum_{j > i} e_j$ for every
$1 \leq i \leq n - 2$. Since $e_{n-1} = 1$, this rearranges to
$e_i \leq 2 + \sum_{j=i+1}^{n-2} e_j$, and dividing by $2$ gives
$a_i \leq 1 + \sum_{j=i+1}^{n-2} a_j = L_i$. By the claim
$(\Leftarrow)$ applied at each step $i = n - 2, n - 3, \ldots, 1$, the
equality $\widetilde{T}_{i-1} = \{0, 1, \ldots, L_{i-1}\}$ propagates from
$\widetilde{T}_{n-2}$ down to $\widetilde{T}_0 = \{0, 1, \ldots, L_0\}$. Hence $D_S = 2 \widetilde{T}_{0}$ is
the full parity-compatible interval, and $K_S = C_S$.

\emph{Necessity.} Conversely, suppose $K_S = C_S$, equivalently
$\widetilde{T}_0 = \{0, 1, \ldots, L_0\}$. Since $\widetilde{T}_0 = Q_{a_1}(\widetilde{T}_1)$ and, by the
boundedness established above, $\widetilde{T}_1 \subseteq \{0, 1, \ldots, L_1\}$,
the claim $(\Rightarrow)$ forces $\widetilde{T}_1 = \{0, 1, \ldots, L_1\}$ and
$a_1 \leq L_1$. Iterating, $\widetilde{T}_i = \{0, 1, \ldots, L_i\}$ and
$a_i \leq L_i$ for every $1 \leq i \leq n - 2$. In unnormalized form,
$e_i \leq 2 L_i = 2 + \sum_{i < j \leq n-2} e_j$, equivalently
$e_i \leq 1 + \sum_{j > i} e_j$ (using $e_{n-1} = 1$).
\end{proof}

\begin{remark}[Generality of the criterion]\label{rem:general-folding}
The proof of Theorem~\ref{thm:criterion} uses the Gilbreath assumption
only through Lemma~\ref{lem:parity} (which gives \(e_{n-1}=1\) and
\(e_1,\ldots,e_{n-2}\) even). The criterion therefore applies to any
ordered tuple of nonnegative integers
\((e_1,\ldots,e_{m-1})\) with \(e_{m-1}=1\) and the remaining entries
even, regardless of whether \((e_i)\) arises as the right anti-diagonal
of a Gilbreath sequence. In this generality, with
\(F(d)=||\cdots ||d-e_1|-e_2|\cdots -e_{m-1}|\), the fiber
\(F^{-1}(\{1\})\) coincides with the full parity-compatible interval
\(\{0,2,\ldots,A+1\}\), \(A=\sum_i e_i\), if and only if
\(e_i\leq 1+\sum_{j>i}e_j\) for every \(1\leq i\leq m-2\).
\end{remark}

\begin{corollary}\label{cor:ic-AP}
The minimal sequence
\(L_n=(2,3,5,7,\ldots,2n-1)\)
is interval-complete.
\end{corollary}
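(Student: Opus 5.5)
We first compute the difference triangle of \(L_n=(2,3,5,7,\ldots,2n-1)\) explicitly. After that, we check the ordered criterion of Theorem~\ref{thm:criterion} directly. The first row of differences is \((1,2,2,\ldots,2)\), with \(n-2\) copies of \(2\). The second row is \((1,0,0,\ldots,0)\), with \(n-3\) zeros. Every subsequent row has the form \((1,0,\ldots,0)\), because \(|0-1|=1\) and \(|0-0|=0\). We will verify this shape by a short induction on the row index \(b\geq 2\). This confirms that \(L_n\in\G_n\). It also gives the right anti-diagonal entry of each row as its last entry.

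From this shape we read off the anti-diagonal:
\[
e_1=s_n-s_{n-1}=2,\qquad e_i=0\ (2\leq i\leq n-2),\qquad e_{n-1}=1.
\]
Row \(b\geq 2\) has length \(n-b\), and its last entry is \(0\) unless the row has length one, which is the apex. Hence \(e_i=0\) for \(2\leq i\leq n-2\). For \(n=3\) the anti-diagonal is just \((e_1,e_2)=(2,1)\), which is consistent with the formula.

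Next we check the inequalities \(e_i\leq 1+\sum_{j>i}e_j\) for \(1\leq i\leq n-2\). For \(i=1\) the inequality reads \(2\leq 1+0+\cdots+0+1=2\), so it holds with equality. For \(2\leq i\leq n-2\) the left side is \(0\), so it holds trivially. Theorem~\ref{thm:criterion} then gives \(K_{L_n}=C_{L_n}\). For \(n\leq 2\) the index range is empty, and the statement holds vacuously as in the proof of the theorem.

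The only real step is establishing the row shape \((1,0,\ldots,0)\) for all rows \(b\geq 2\). This is a one-line induction, so we expect no genuine obstacle. As a sanity check, \(A(L_n)=3\) and \(|C_{L_n}|=5\). This is consistent with the value \(5\) for the minimal extension width announced in the summary.
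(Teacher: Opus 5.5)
Your proposal is correct and follows the paper's proof. Both read off the right anti-diagonal $(2,0,\ldots,0,1)$ and check Theorem~\ref{thm:criterion}, which holds with equality at $i=1$ and trivially for the other indices; you additionally justify the anti-diagonal via the explicit row shape $(1,0,\ldots,0)$ and verify $L_n\in\G_n$, both of which the paper leaves implicit.
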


\begin{proof}
Its right anti-diagonal is $(2, 0, 0, \ldots, 0, 1)$. The only
nontrivial criterion is at $i = 1$: $e_1 = 2 \leq 1 + 0 + \cdots + 0 + 1
= 2$.
\end{proof}

\begin{corollary}\label{cor:ic-doubling}
The doubling sequence $U_n = (2, 3, 5, 9, 17, \ldots, 2^{n-1} + 1)$
is interval-complete, so $|K_{U_n}| = A(U_n) + 2 = 2^{n-1} + 1$.
\end{corollary}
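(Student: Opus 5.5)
We first compute the right anti-diagonal of \(U_n\) explicitly, and then apply Theorem~\ref{thm:criterion} and Lemma~\ref{lem:CS-count}. The gaps of \(U_n=(2,3,5,9,\ldots,2^{n-1}+1)\) are \(1,2,4,\ldots,2^{n-2}\), so row \(1\) is \((1,2,4,\ldots,2^{n-2})\). Since consecutive differences of a geometric sequence \(2^j\) are again \(2^j\), we get \(|2^{j+1}-2^j|=2^j\). Hence row \(2\) is \((1,2,\ldots,2^{n-3})\) together with the first difference \(|2-1|=1\) at the left. More carefully, the rows behave as follows. Row \(b\) (for \(b\ge1\)) should be \((1,1,2,4,\ldots,2^{n-b-2})\), with the first entry \(1\) and the remaining entries powers of two, except that row \(1\) is \((1,2,\ldots,2^{n-2})\).

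We verify this by induction on \(b\). From row \(b\), equal to \((1,1,2,\ldots,2^{m})\), the next row is \((0,1,2,\ldots,2^{m-1})\). That would break the Gilbreath property, so this guess is wrong and must be redone carefully. Example~\ref{ex:notation}-style computation for \((2,3,5,9,17)\) gives
row 1: \(1,2,4,8\);
row 2: \(1,2,4\);
row 3: \(1,2\);
row 4: \(1\).
So in fact row \(b\) is \((1,2,\ldots,2^{n-1-b})\), because \(|2^{j+1}-2^j|=2^j\) and \(|2-1|=1\). The induction is immediate: applying differences to \((1,2,\ldots,2^{m})\) gives \((1,2,\ldots,2^{m-1})\). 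This also confirms \(U_n\in\G_n\).

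The right anti-diagonal is therefore \(e_i=2^{n-1-i}\), i.e. \((e_1,\ldots,e_{n-1})=(2^{n-2},2^{n-3},\ldots,2,1)\). The criterion of Theorem~\ref{thm:criterion} then reads \(2^{n-1-i}\le 1+\sum_{j>i}2^{n-1-j}=1+(2^{n-1-i}-1)=2^{n-1-i}\), which holds with equality for every \(i\). So \(K_{U_n}=C_{U_n}\). Finally, Lemma~\ref{lem:CS-count} gives \(|K_{U_n}|=A(U_n)+2\), where \(A(U_n)=2^{n-1}-1\), so \(|K_{U_n}|=2^{n-1}+1\).

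The only real step is the induction identifying the rows of the difference triangle. It is routine once the pattern \(|2^{j+1}-2^j|=2^j\) is noted. The small cases \(n\le2\) should be checked separately: for \(n=2\), \(K=\{1,3,5\}\) has size \(3=2^{1}+1\).
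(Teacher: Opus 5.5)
Your proof is correct and follows the paper's own argument: you identify row $b$ of the triangle as $(1,2,\ldots,2^{n-1-b})$, read off the anti-diagonal $(2^{n-2},\ldots,2,1)$, observe equality in every inequality of Theorem~\ref{thm:criterion}, and compute $A(U_n)=2^{n-1}-1$. In a clean write-up, delete the abandoned guess $(1,1,2,\ldots)$ for the rows, and say explicitly that Lemma~\ref{lem:CS-count} gives $|C_{U_n}|=A(U_n)+2$, which becomes $|K_{U_n}|$ only through the equality $K_{U_n}=C_{U_n}$.
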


\begin{proof}
For every positive row $b\geq 1$, the triangle of $U_n$ has row $b$
equal to
\[
(1,2,4,\ldots,2^{n-b-1}).
\]
Thus the right anti-diagonal is
\[
(e_1,\ldots,e_{n-1})=(2^{n-2},2^{n-3},\ldots,2,1).
\]
At each $i$, we have $e_i=2^{n-i-1}$, while
\[
1+\sum_{j>i}e_j
=
1+(2^{n-i-1}-1)
=
2^{n-i-1}.
\]
Equality holds throughout the criterion, so $U_n$ is interval-complete.
Since
\[
A(U_n)=2^{n-1}-1,
\]
we get
\[
|K_{U_n}|=A(U_n)+2=2^{n-1}+1.
\]
\end{proof}

\section{The first hole}\label{sec:first-hole}

\begin{figure}[htbp]
\centering
\begin{tikzpicture}[scale=0.55, every node/.style={font=\footnotesize}]
\draw[->, thick] (3.5, 0) -- (26.5, 0);
\foreach \x in {5,7,9,11,13,15,17,19,21,23,25}{
  \draw (\x, 0.15) -- (\x, -0.15);
  \node[below=2pt] at (\x, -0.15) {$\x$};
}
\foreach \x in {5,7,9,11,13,17,19,21,23,25}{
  \filldraw[blue!70!black] (\x, 0) circle (0.16);
}
\filldraw[fill=white, draw=red, thick] (15, 0) circle (0.18);
\draw[red, thick] (14.7, -0.3) -- (15.3, 0.3);
\draw[red, thick] (14.7, 0.3) -- (15.3, -0.3);
\node[above=10pt, color=red!70!black] at (15, 0.2) {hole at $k = s_n = 15$};
\draw[<-, thick] (5, 1.3) -- (5, 0.3);
\node[above] at (5, 1.3) {$s_n - A(S) - 1$};
\draw[<-, thick] (25, 1.3) -- (25, 0.3);
\node[above] at (25, 1.3) {$s_n + A(S) + 1$};
\draw[thick, gray] (5, -1.0) -- (5, -0.7) -- (25, -0.7) -- (25, -1.0);
\node[below, color=gray!50!black] at (15, -1.1) {$C_S$: 11 odd integers from $5$ to $25$};
\end{tikzpicture}
\caption{The first hole. For $S = (2, 3, 5, 9, 15)$, the candidate set
$C_S$ consists of all odd integers in $[s_n - A(S) - 1,\, s_n + A(S) + 1]
= [5, 25]$. The valid-extension set $K_S$ (blue dots) contains all of
these except the center value $k = 15$ (marked $\times$). Thus
$H_S = \{15\}$ and $h(S) = 1$.}
\label{fig:first-hole}
\end{figure}

\begin{theorem}[First hole]\label{thm:hole}
For $n\leq 4$, every $S\in\G_n$ has $K_S=C_S$. The smallest $n$
for which some $S\in\G_n$ has $K_S\neq C_S$ is $n=5$. At this length,
the unique sequence $S\in\G_5$ with $K_S\neq C_S$ is
\[
S=(2,3,5,9,15).
\]
Explicitly,
\[
C_S=\{5,7,9,11,13,15,17,19,21,23,25\},
\quad
K_S=\{5,7,9,11,13,17,19,21,23,25\},
\]
so $H_S=\{15\}$ (Figure~\ref{fig:first-hole}).
\end{theorem}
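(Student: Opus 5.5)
The plan is to reduce everything to the interval-completeness criterion (Theorem~\ref{thm:criterion}) and enumerate $\G_n$ for $n\le 5$. The enumeration uses the machinery already developed: for $S\in\G_{n-1}$, the members of $\G_n$ extending $S$ are exactly $(S,k)$ with $k\in K_S^+$. Whenever $S$ is interval-complete, $K_S=C_S$ is explicitly the set of values $\equiv s_{n-1}\pmod 2$ within distance $A(S)+1$. So I will build $\G_3,\G_4,\G_5$ level by level, computing the right anti-diagonal of each member and testing the ordered inequalities $e_i\le 1+\sum_{j>i}e_j$ for $1\le i\le n-2$.

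\emph{Step 1 ($n\le 4$).} $\G_2=\{(2,3)\}$ is handled in the proof of Theorem~\ref{thm:criterion}. Its anti-diagonal is $(1)$ with $A=1$, so $K_{(2,3)}=\{1,3,5\}$ and $K^+=\{5\}$, giving $\G_3=\{(2,3,5)\}$. That sequence has anti-diagonal $(2,1)$, which satisfies $2\le 2$, hence is interval-complete with $A=3$. Then $K=\{1,3,\dots,9\}$ and $K^+=\{7,9\}$, so $\G_4=\{(2,3,5,7),(2,3,5,9)\}$. Their anti-diagonals are $(2,0,1)$ and $(4,2,1)$; these are also Corollaries~\ref{cor:ic-AP} and \ref{cor:ic-doubling}. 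Both satisfy the criterion ($2\le 2$, $0\le 2$; $4\le 4$, $2\le 2$). This proves $K_S=C_S$ for all $S\in\G_n$, $n\le 4$.

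\emph{Step 2 ($n=5$).} Since both members of $\G_4$ are interval-complete, their increasing extensions are read off from $C_S$. For $(2,3,5,7)$, with $A=3$, this gives $K^+=\{9,11\}$. For $(2,3,5,9)$, with $A=7$, it gives $K^+=\{11,13,15,17\}$. Thus $|\G_5|=6$. For each of the six sequences I will write out the four rows of the difference triangle (short arithmetic) and read off $(e_1,e_2,e_3,e_4)$. The expected anti-diagonals are:
\begin{itemize}
\item $(2,3,5,7,9)$: $(2,0,0,1)$;
\item $(2,3,5,7,11)$: $(4,2,2,1)$;
\item $(2,3,5,9,11)$: $(2,2,0,1)$;
\item $(2,3,5,9,13)$: $(4,0,2,1)$;
\item $(2,3,5,9,15)$: $(6,2,0,1)$;
\item $(2,3,5,9,17)$: $(8,4,2,1)$.
\end{itemize}
Checking the three inequalities for each, every sequence passes except $(2,3,5,9,15)$, where $e_1=6>1+2+0+1=4$. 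By Theorem~\ref{thm:criterion}, this is the unique member of $\G_5$ with $K_S\ne C_S$.

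\emph{Step 3 (explicit sets).} For $S=(2,3,5,9,15)$, the explicit $C_S$ and $K_S$ were already computed: $C_S$ via $A(S)=9$ in Example~\ref{ex:signed-sums-miss}, and $K_S$ via the reverse tree of Proposition~\ref{prop:reverse}, which gives $D_S=\{2,4,6,8,10\}$. Distance $0$ is missing from $D_S$, so $H_S=\{15\}$.

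The only real risk is arithmetic in Step 2, namely the six triangles and making sure the enumeration of $\G_5$ is complete. Completeness of the enumeration is guaranteed structurally by the $K^+$ description, so the main obstacle is simply careful verification of the anti-diagonals.
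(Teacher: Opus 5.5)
Your proposal is correct and follows essentially the same route as the paper. You enumerate $\G_2,\ldots,\G_5$ via the increasing valid extensions $K_S^+$, read off the right anti-diagonals, and apply Theorem~\ref{thm:criterion}, which fails only for $(2,3,5,9,15)$ at $i=1$ since $6>4$; you then locate the hole with the reverse tree $D_S=\{2,4,6,8,10\}$. Your anti-diagonal arithmetic checks out, and using interval-completeness of the two members of $\G_4$ to read $K_S^+$ directly off $C_S$ is a clean justification that the list of six sequences in $\G_5$ is complete.
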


\begin{proof}
\emph{The case $n \leq 4$.} The families are
\[
\G_2=\{(2,3)\},\qquad
\G_3=\{(2,3,5)\},\qquad
\G_4=\{(2,3,5,7),(2,3,5,9)\}.
\]
Their right anti-diagonals are respectively
\[
(1),\qquad (2,1),\qquad (2,0,1),\qquad (4,2,1).
\]
Each satisfies the criterion of Theorem~\ref{thm:criterion}
(vacuously for $n=2$, and directly for the others), so all four
sequences are interval-complete.

\emph{The case $n = 5$.} $\G_5$ has six sequences. These are obtained by extending the two elements of \(\G_4\):
\[
K_{(2,3,5,7)}^+=\{9,11\},\qquad
K_{(2,3,5,9)}^+=\{11,13,15,17\}.
\]
Thus the displayed six sequences are all of \(\G_5\). We list them with
their right anti-diagonals:
\[
\begin{array}{l|l}
S & (e_1, e_2, e_3, e_4) \\
\hline
(2,3,5,7,9) & (2, 0, 0, 1) \\
(2,3,5,7,11) & (4, 2, 2, 1) \\
(2,3,5,9,11) & (2, 2, 0, 1) \\
(2,3,5,9,13) & (4, 0, 2, 1) \\
(2,3,5,9,15) & (6, 2, 0, 1) \\
(2,3,5,9,17) & (8, 4, 2, 1)
\end{array}
\]
For each of these we check the criterion \(e_i \leq 1+\sum_{j>i}e_j\)
at \(i=1,2,3\). All five sequences except \((2,3,5,9,15)\) satisfy the
criterion at every \(i\) and are therefore interval-complete by
Theorem~\ref{thm:criterion}. For \((2,3,5,9,15)\), the criterion fails
at \(i=1\): \(e_1=6>1+2+0+1=4\). The corresponding extension sets are
\[
\begin{array}{l|l|c}
S & K_S & |K_S| \\
\hline
(2,3,5,7,9) & \{5,7,9,11,13\} & 5 \\
(2,3,5,7,11) & \{1,3,5,7,9,11,13,15,17,19,21\} & 11 \\
(2,3,5,9,11) & \{5,7,9,11,13,15,17\} & 7 \\
(2,3,5,9,13) & \{5,7,9,11,13,15,17,19,21\} & 9 \\
(2,3,5,9,15) & \{5,7,9,11,13,17,19,21,23,25\} & 10 \\
(2,3,5,9,17) & \{1,3,5,\ldots,33\} & 17
\end{array}
\]
and only $(2,3,5,9,15)$ has $|K_S| \neq A(S) + 2$.

\emph{Identifying the hole.} The right anti-diagonal of
$S = (2,3,5,9,15)$ is $(6, 2, 0, 1)$ (Example~\ref{ex:notation}).
Applying the reverse tree of Proposition~\ref{prop:reverse}:
\[
T_4 = \{1\},\
T_3 = P_1(\{1\}) = \{0, 2\},\
T_2 = P_0(\{0, 2\}) = \{0, 2\},
\]
\[
T_1 = P_2(\{0, 2\}) = \{0, 2, 4\},\
T_0 = P_6(\{0, 2, 4\}) = \{2, 4, 6, 8, 10\}.
\]
The value $0$ would be in $T_0$ only via $6 - 6$, but $6 \notin T_1$;
hence $0 \notin T_0$. Therefore $D_S = \{2, 4, 6, 8, 10\}$, giving
$K_S = \{15 \pm d : d \in D_S\} = C_S \setminus \{15\}$, so
$H_S = \{15\}$.
\end{proof}

\section{Minimum extension width}\label{sec:min}

\begin{theorem}[Minimum extension width]\label{thm:min}
For every $n \geq 3$,
\[
\min_{S \in \G_n} |K_S| = 5,
\]
and the minimum is uniquely achieved by the minimal sequence
\(L_n=(2,3,5,7,\ldots,2n-1)\).
\end{theorem}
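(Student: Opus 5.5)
The plan is to work entirely with the reverse tree of Proposition~\ref{prop:reverse} and to count. By Theorem~\ref{thm:symmetry}, $K_S=\{s_n\pm d: d\in D_S\}$, so
\[
|K_S| = 2|D_S| - [\,0\in D_S\,], \qquad D_S=T_0 .
\]
The target value $5$ then corresponds to $|T_0|=3$ with $0\in T_0$. Any $S$ with $|T_0|\ge 4$, or with $|T_0|=3$ and $0\notin T_0$, has $|K_S|\ge 6$. So the theorem reduces to two claims: (i) $|T_0|\ge 3$ for every $S\in\G_n$ with $n\ge 3$; (ii) the equality $|T_0|=3$ with $0\in T_0$ forces $S=L_n$. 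The value $|K_{L_n}|=5$ is already available. Corollary~\ref{cor:ic-AP} gives $K_{L_n}=C_{L_n}$ with $A(L_n)=3$, so $|K_{L_n}|=A+2=5$; directly, the reverse tree gives $T_0=\{0,2,4\}$.

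The basic counting tool is an exact size recursion for one preimage step. In $P_e(T)$ the upper branch $e+T$ is injective and lies in $[e,\infty)$. The lower branch $\{e-t: t\le e\}$ lies in $[0,e]$, and the two branches overlap only in the value $e$, coming from $t=0$. Hence
\[
|P_e(T)| = |T| + \#\{t\in T: 0<t\le e\}.
\]
In particular $|T_{i-1}|\ge |T_i|$, so $|T_i|\ge |T_{n-2}|=|\{0,2\}|=2$ for all $i$. Moreover, the size strictly grows at step $i$ exactly when $T_i$ has a positive element $\le e_i$. Note that $e_1=s_n-s_{n-1}\ge 2$, since the sequence is strictly increasing and the terms after $s_1$ are odd (Lemma~\ref{lem:parity}). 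The smallest positive element of $T_{n-2}$ is $2$. So claim (i) follows once I show that the smallest positive element of $T_1$ is at most $e_1$, or that $|T_1|\ge 3$.

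To control this I will use the structure linking $S$ to its prefix $S'=(s_1,\dots,s_{n-1})\in\G_{n-1}$. The anti-diagonal of $S$ is precisely the folding orbit of the last gap under $S'$. Writing $e'_j$ for the anti-diagonal of $S'$, we have $e_1=g:=s_n-s_{n-1}$ and $e_{i+1}=|e_i-e'_i|$. This is just the chain $r_i$ of Section~\ref{sec:criterion-section} for $S'$ at $d=g$, and $g\in D_{S'}$ because $S\in\G_n$. I then argue by induction on $n$, with base case $n=3$, where $\G_3=\{(2,3,5)\}=\{L_3\}$ and $T_0=\{0,2,4\}$.

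For the inductive step, suppose $|T_0|\le 3$. The size recursion forces at most one growth step along the whole tree, and the final step has to produce growth, because $e_1\ge 2$ and $2$-type small elements survive down to $T_1$. Tracking the smallest positive element of $T_i$ through the steps with no growth, I aim to show that all $e_i$ with $2\le i\le n-2$ must vanish, and then that $e_1=2$. If $e_2=\dots=e_{n-2}=0$, then $T_1=\{0,2\}$ and $P_{e_1}(\{0,2\})$ has exactly three elements including $0$ only when $e_1=2$. Finally, an anti-diagonal of the form $(2,0,\dots,0,1)$ means the folding orbit of $g=2$ under $S'$ is $2,0,0,\dots,0,1$. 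By the induction hypothesis applied to $S'$, this should force $S'=L_{n-1}$ and $g=2$, hence $S=L_n$.

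The main obstacle is the middle step: showing that a tree with no growth (a "non-growing" tree) forces $e_2=\dots=e_{n-2}=0$. Without the Gilbreath structure this is false for arbitrary tuples: an even $e_i$ far larger than every element of $T_i$ produces no growth yet is nonzero. So I expect to need the relation $e_{i+1}=|e_i-e'_i|$ together with the bound $e_i\le A(S'_{\text{suffix}})+1$ from Corollary~\ref{cor:candidate}, applied to $S'$. This bound says each $e_i$ is at most $1$ plus the sum of the later $e'$ entries. I would first try to prove that $T_i$ always contains a positive element $\le 2+\sum_{j>i}e_j$ that is "reachable" from $e_i$. If that fails, the fallback is a direct argument: any nonzero $e_i$ with $i\ge 2$ makes $|T_{i-1}|\ge 3$, and then the last step adds at least one more element, giving $|T_0|\ge 4$ and so $|K_S|\ge 7$.
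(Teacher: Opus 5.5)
Your reduction matches the paper's: $|K_S|=2|D_S|-[0\in D_S]$, then $|D_S|\ge 3$, then equality forces the anti-diagonal $(2,0,\ldots,0,1)$. Your exact count $|P_e(T)|=|T|+\#\{t\in T:0<t\le e\}$ is also correct. However, the step you call the main obstacle is never proved, and both justifications you give around it are wrong.

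Your proposed counterexample for arbitrary tuples is backwards. By your own formula, an $e_i$ exceeding every element of $T_i$ \emph{does} enlarge the set, since $T_i$ always has a positive element. The non-growth phase is in fact trivial and needs only Lemma~\ref{lem:parity}. If $T_i=\{0,2\}$ and step $i$ does not grow, then $e_i<2$, so $e_i=0$ and $T_{i-1}=P_0(\{0,2\})=\{0,2\}$. Thus $|T_1|=2$ forces $e_2=\cdots=e_{n-2}=0$ and $T_1=\{0,2\}$, and then $e_1\ge 2$ gives $|T_0|=3$. That is claim (i), with no need for candidate bounds on suffixes or ``reachable'' elements.

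Your reason for growth at the last step, that ``$2$-type small elements survive down to $T_1$,'' is false even for Gilbreath sequences. For $V_7$ one has $T_1=D_{V_6}$, whose minimum is $4$ by Lemma~\ref{lem:Dn-extremes}. The correct reason is the hypothesis of claim (ii): $0\in T_0=P_{e_1}(T_1)$ means $e_1\in T_1$, which is a positive element $\le e_1$. Hence the only growth step is the last one, so $T_1=\{0,2\}$ and the middle entries vanish. Then $0\in P_{e_1}(\{0,2\})$ forces $e_1=2$, which is exactly Lemma~\ref{lem:equality}. Your fallback is this same argument, and it needs the same fact $e_1\in T_1$ to justify that the last step adds an element.

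Your final step is a genuinely different and valid route to Lemma~\ref{lem:AP}. The paper runs a descending induction on the gaps inside the difference triangle. You instead use the prefix relation $e_{i+1}=|e_i-e'_i|$, and whenever $e_{i+1}=0$ this forces $e'_i=e_i$. So the orbit $2,0,\ldots,0,1$ gives $e'=(2,0,\ldots,0,1)$ (the last entry is $1$ anyway), meaning $S'$ has the same anti-diagonal shape. Induction on $n$, with base $\G_3=\{L_3\}$, then yields $S'=L_{n-1}$ and $s_n=s_{n-1}+2$, so $S=L_n$. You should write out this recovery of $e'$ explicitly, since it is what makes the induction hypothesis applicable to $S'$. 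Once written, it is shorter than the paper's triangle computation.
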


We separate the proof into three lemmas.

\begin{lemma}\label{lem:minD}
For every $S \in \G_n$, $|D_S| \geq 3$.
\end{lemma}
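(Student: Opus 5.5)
We need $|D_S|\ge 3$ for every $S\in\G_n$. The plan is to run the reverse-tree recursion of Proposition~\ref{prop:reverse} and track how the size of $T_i$ evolves as we go back up the anti-diagonal. For $n=2$, $T_0=\{1\}$, and the inequality as stated fails there, since $K_{(2,3)}=\{1,3,5\}$ has only one distance. So the lemma has to be read for $n\ge 3$, which is the range of Theorem~\ref{thm:min}. Concretely, I would prove the stronger statement that for $n\ge 3$ every $T_i$ with $i\le n-3$ has at least three elements.

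The base step comes from Lemma~\ref{lem:parity}, which gives $e_{n-1}=1$, so $T_{n-2}=P_1(\{1\})=\{0,2\}$. The next step depends on whether $e_{n-2}$ is $0$ or positive (it is even).
\begin{itemize}
\item If $e_{n-2}=e\ge 2$, then $P_e(\{0,2\})=\{e,e+2,e-2\}$. These are three distinct values, since $e-2\ge 0$.
\item If $e_{n-2}=0$, we only get $P_0(\{0,2\})=\{0,2\}$.
\end{itemize}
So the size can stay at $2$ across zero entries, and I must show it eventually reaches $3$ before index $0$.

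The key monotonicity fact is that $P_e$ never shrinks a set. The upper branch $t\mapsto e+t$ is injective, so $|P_e(T)|\ge |T|$. Hence once some $T_i$ has size $\ge 3$, all earlier $T_j$ do too.

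It therefore suffices to show that not all of $e_1,\dots,e_{n-2}$ vanish. Indeed, if $e_j$ is the last nonzero entry among them (largest such $j$), then $T_j=\{0,2\}$ and $T_{j-1}$ has size $3$ by the computation above. The main obstacle is to rule out $e_1=\dots=e_{n-2}=0$. But $e_1=s_n-s_{n-1}>0$ because $S$ is strictly increasing, and for $n\ge 3$ the index $1$ lies in $\{1,\dots,n-2\}$. So $e_1\ge 2$.

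This gives an even simpler route: apply the final step directly with $e=e_1\ge 2$. Whatever $T_1$ is, it contains $0$ or some element $t\le e_1$. I would prefer the monotone argument, though. It gives $|T_1|\ge 2$ (from $|T_{n-2}|=2$ and monotonicity), and then the cleanest finish is to show $|P_{e}(T)|\ge |T|+1$ whenever $e\ge 2$ and $T$ contains an element $t$ with $0<t\le e$. For such a $t$, the value $e-t$ lies below $e$. I would then argue by cases that either $e-t$ is new, or the minimal element $m$ of $T$ yields a new value $e-m<e$ not in $e+T$, since every element of $e+T$ is $\ge e$. Only the case $m=0$ with nothing else $\le e$ needs separate handling, and there $e-0=e=e+0$ coincides. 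In that case I would instead use the element $2$ carried along from $T_{n-2}$: it persists through zero folds, and I would check that $T_1$ always contains a positive value $\le e_1$, or else handle the situation directly. This bookkeeping is the only delicate part.
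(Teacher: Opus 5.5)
Your fourth paragraph already contains a complete proof, and it is the paper's argument with its two cases merged: take the largest $j\le n-2$ with $e_j\neq 0$ (it exists because $e_1=s_n-s_{n-1}\geq 2$), note that $T_j=\{0,2\}$ and $|T_{j-1}|=3$, and finish by the monotonicity of $|T_i|$. You should discard the surrounding material, for three reasons. First, the ``stronger statement'' in your opening paragraph is false: for $L_n$ with $n\geq 4$ one has $T_{n-3}=\{0,2\}$. Second, the alternative finish in your last paragraph is unnecessary and left unfinished. Third, for $n=2$ one actually gets $D_{(2,3)}=T_0=P_1(\{1\})=\{0,2\}$, not $\{1\}$; this still has fewer than $3$ elements, so your restriction to $n\geq 3$ is right, and the paper's proof also uses it implicitly when it takes $e_1$ to be even.
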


\begin{proof}
The reverse tree starts with $T_{n-1} = \{1\}$, and the first step
gives $T_{n-2} = P_1(\{1\}) = \{0, 2\}$. Each preimage step
$P_e(T)$ contains the translated set $\{e + t : t \in T\}$ with the
same cardinality as $T$, so the cardinality of $T_i$ is non-decreasing
as $i$ decreases.

By Lemma~\ref{lem:parity}, $e_1 = s_n - s_{n-1}$ is a positive even
integer, so $e_1 \geq 2$.

\emph{Case 1.} If $e_2 = \cdots = e_{n-2} = 0$, then each step
$P_0(\{0, 2\}) = \{0, 2\}$ leaves the set unchanged, so $T_1 = \{0, 2\}$.
The final step is $P_{e_1}(\{0, 2\})$. Since $e_1 \geq 2$, both
$e_1 - 0 = e_1$ and $e_1 - 2$ are nonnegative, giving
$T_0 = \{e_1 - 2, e_1, e_1 + 2\}$, three distinct values.

\emph{Case 2.} If some $e_j$ with $2 \leq j \leq n - 2$ is positive,
take the largest such $j$. Then $e_{j+1} = \cdots = e_{n-2} = 0$, so
$T_j = \{0, 2\}$. Since $e_j$ is a positive even integer
(Lemma~\ref{lem:parity}), $e_j \geq 2$, and
$T_{j-1} = P_{e_j}(\{0, 2\}) = \{e_j - 2, e_j, e_j + 2\}$ has $3$
distinct elements. By cardinality non-decrease, $|T_0| \geq 3$.

In both cases $|D_S| = |T_0| \geq 3$.
\end{proof}

\begin{lemma}\label{lem:equality}
If $|K_S| = 5$, then $(e_1, e_2, \ldots, e_{n-1}) = (2, 0, 0, \ldots, 0, 1)$.
\end{lemma}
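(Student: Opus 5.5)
Since $K_S=\{s_n\pm d: d\in D_S\}$ by Proposition~\ref{prop:reverse}, we have $|K_S|=2|D_S|-[0\in D_S]$. So $|K_S|=5$ forces $|D_S|=3$ with $0\in D_S$; the alternative $|D_S|=2.5$ is impossible. The plan is to run the reverse tree and show that any positive $e_j$ with $2\le j\le n-2$, or any $e_1\ne 2$, makes $|D_S|\ge 4$ or excludes $0$.

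\emph{Step 1: $e_2=\cdots=e_{n-2}=0$.} Suppose not, and let $j$ be the largest index in $\{2,\dots,n-2\}$ with $e_j>0$. As in Case~2 of Lemma~\ref{lem:minD}, $T_{j-1}=\{e_j-2,e_j,e_j+2\}$ with $e_j\ge 2$. Each step $P_e(T)$ contains the translate $e+T$, and it also contains $e-t$ for every $t\le e$ in $T$. Write $m=\max T$ and $\mu=\min T$. We have $|P_e(T)|=|T|$ only if every lower-branch value $e-t$ (with $t\le e$) already lies in $e+T$. Since $e-t\le e\le e+\mu$, this can happen only if $e-t=e+\mu$, i.e.\ $t=\mu=0$, or if no $t\le e$ exists.

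So each later step preserves cardinality 3 only if either $\min T_i>e_i$, or the lower branch consists solely of $t=0$. Along the way the minimum must stay positive or the set must grow. In particular, at the final step we need $0\in T_0$, which requires $e_1\in T_1$. Meanwhile $T_1$ has three elements, all at least $e_j-2\ge 0$.

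Tracking carefully: if $0\in T_i$ at some step with $e_i>0$, the lower branch produces $e_i$, which is new relative to $e_i+T_i$ when $0\in T_i$ is the minimum (it coincides with $e_i+0$). Only when some other $t\le e_i$ exists do we get growth. I expect the cleanest route is a direct argument at the final step. Write $T_1=\{x<y<z\}$; then $e_1+T_1$ already has 3 elements, and $0\in T_0$ forces $e_1=t$ for some $t\in T_1$. If $t>\min T_1=x$, then $e_1-x>0$ is a fourth element. So $e_1=x$ and $T_0=\{0,y-x,z-x,\dots\}\cup\{x+T_1\}$. This has at least 4 elements unless $x=0$, and then $e_1=0$, contradicting $e_1\ge 2$.

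Hence $0\in D_S$ with $|D_S|=3$ forces $\min T_1=e_1\ge 2$. Then the lower branch gives $0$, while the upper branch gives $e_1+T_1$, three values all $\ge 2e_1>0$. That makes 4 elements, a contradiction. So in fact $|D_S|=3$ with $0\in D_S$ already forces $|T_1|\le 2$, hence $T_1=\{0,2\}$. By cardinality non-decrease, $|T_i|=2$ for all $1\le i\le n-2$. This gives Step~1 directly: if some $e_j>0$ with $2\le j\le n-2$, then $|T_{j-1}|=3$, which is impossible.

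\emph{Step 2: $e_1=2$.} With $T_1=\{0,2\}$, we get $T_0=\{e_1-2,e_1,e_1+2\}$. This contains $0$ only if $e_1=2$, since $e_1\ge 2$. Combined with $e_{n-1}=1$ from Lemma~\ref{lem:parity}, this yields $(2,0,\dots,0,1)$.

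The main obstacle is the counting argument that $|T_1|=3$ together with $0\in T_0$ forces $|T_0|\ge 4$. I will present it cleanly via $\min T_1$: the lower branch contributes $e_1-\min T_1$ whenever $\min T_1\le e_1$, and this lies strictly below $e_1+\min T_1$ unless $\min T_1=0$.
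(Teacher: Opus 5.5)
Your argument is correct and is essentially the paper's proof: you reduce to $|D_S|=3$ with $0\in D_S$, then note that $0$ together with the translate $e_1+T_1$ of $|T_1|$ positive values forces $|T_1|=2$, then use cardinality monotonicity from $T_{n-2}=\{0,2\}$ to force every middle $e_j=0$, and finally read off $e_1=2$. In the write-up, cut the exploratory material, and in particular the sentence claiming $T_0=\{0,y-x,z-x,\dots\}\cup(x+T_1)$, which is false: when $e_1=\min T_1$ the lower branch is just $\{0\}$. The case split on $\min T_1$ is also unnecessary, since $0$ together with $e_1+T_1\subseteq[2,\infty)$ already gives $|D_S|\ge 4$ whenever $|T_1|\ge 3$.
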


\begin{proof}
By Theorem~\ref{thm:symmetry}, $|K_S| = 2|D_S|$ if $0 \notin D_S$ and
$|K_S| = 2|D_S| - 1$ if $0 \in D_S$. So $|K_S| = 5$ requires
$|D_S| = 3$ and $0 \in D_S$.

For $0 \in D_S = P_{e_1}(T_1)$, we need $e_1 - t = 0$ for some
$t \in T_1$, i.e., $e_1 \in T_1$. Since $e_1 \geq 2$, this means
$T_1$ contains an element $\geq 2$.

If $|T_1| \geq 3$, then $P_{e_1}(T_1)$ contains the three distinct
positive elements $\{e_1 + t : t \in T_1\}$, plus the element $0$ from
the lower branch, giving $|D_S| \geq 4$ and contradicting $|D_S| = 3$.
Hence $|T_1| = 2$.

The sequence $T_{n-2} = \{0, 2\}, T_{n-3}, \ldots, T_1$ has
non-decreasing cardinality, so all these sets have cardinality $2$.
Each middle $e_j$ is even (Lemma~\ref{lem:parity}), so either
$e_j = 0$, in which case $P_{e_j}(\{0,2\}) = \{0,2\}$, or $e_j \geq 2$,
in which case $P_{e_j}(\{0,2\}) = \{e_j - 2,\, e_j,\, e_j + 2\}$ has
three distinct elements. Preservation of cardinality $2$ therefore
forces $e_j = 0$ for each $2 \leq j \leq n - 2$, and $T_1 = \{0, 2\}$.

Since $e_1 \in T_1 = \{0, 2\}$ and $e_1 \geq 2$, $e_1 = 2$. Combined
with $e_{n-1} = 1$, we get $(e_1, \ldots, e_{n-1}) = (2, 0, \ldots, 0, 1)$.
\end{proof}

\begin{lemma}\label{lem:AP}
If $S \in \G_n$ has $(e_1, e_2, \ldots, e_{n-1}) = (2, 0, 0, \ldots, 0, 1)$,
then $S = L_n$.
\end{lemma}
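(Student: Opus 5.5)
We argue by induction on \(n\), peeling off the last term. The key observation is that the right anti-diagonal of \(S\) determines the right anti-diagonal of the truncated sequence \(S'=(s_1,\ldots,s_{n-1})\). Write \(e'_i=s_{n-1-i}^{i}\) for the anti-diagonal of \(S'\). The difference-triangle recurrence \(e_{i}=|e_{i+1}' - e_i|\)\ldots\ must be set up carefully. Precisely, the entry \(e_{i+1}=s_{n-i-1}^{i+1}\) equals \(|s_{n-i}^{i}-s_{n-i-1}^{i}|=|e_i-e'_i|\). So \(S\) arises from \(S'\) exactly as in Proposition~\ref{prop:criterion}: the chain \(r_0=e_1\), \(r_i=|r_{i-1}-e'_i|\) satisfies \(r_i=e_{i+1}\).

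Given \((e_1,\ldots,e_{n-1})=(2,0,\ldots,0,1)\), we have \(e_{i+1}=|e_i-e'_i|\). This forces \(e'_1\in\{2\}\) (from \(|2-e'_1|=0\)). It then forces \(e'_i=e_i=0\) for \(2\le i\le n-3\), since \(|0-e'_i|=0\). At the last step, \(|0-e'_{n-2}|=1\) gives \(e'_{n-2}=1\), consistent with the Gilbreath condition on \(S'\). Hence \(S'\in\G_{n-1}\) has anti-diagonal \((2,0,\ldots,0,1)\) of length \(n-2\). By induction \(S'=L_{n-1}\), and \(s_n=s_{n-1}+e_1=2n-3+2=2n-1\), so \(S=L_n\).

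The base case is \(n=3\), where \(\G_3=\{(2,3,5)\}=\{L_3\}\) (as listed in the proof of Theorem~\ref{thm:hole}); one could also start at \(n=2\), where the anti-diagonal is just \((1)\). For \(n=3\) the pattern \((2,1)\) has no interior zeros. In the inductive step one must check the edge case \(n=4\), where the anti-diagonal is \((2,0,1)\): there \(e'_1=2\) from \(|2-e'_1|=0\), and \(e'_2=1\) from \(|0-e'_2|=1\). This gives \((2,1)\), which is the pattern for \(n-1=3\). So the index bookkeeping works uniformly for \(n\ge4\).

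The main obstacle is verifying the identity \(e_{i+1}=|e_i-e'_i|\) with correct indexing, together with the endpoint \(e'_{n-2}=s_1^{n-2}=1\), which is automatic from the Gilbreath property and need not be derived. Once that relation is in place, the deduction is forced step by step, and uniqueness is immediate since each \(e'_i\) is determined.
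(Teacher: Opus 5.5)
Your proof is correct, and it takes a genuinely different route from the paper's.

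\textbf{The paper's route.} The paper fixes $n$ and runs a descending induction on the gap index $j$. Assuming $g_{j+1}=\cdots=g_{n-1}=2$, every entry $s_a^b$ with $a\ge j+1$ and $b\ge 2$ vanishes. The recurrence then gives $s_j^{m+1}=s_j^m$ along the $j$-th diagonal. This carries the hypothesis $e_{n-j}=s_j^{n-j}=0$ up to $s_j^2=|2-g_j|=0$.

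\textbf{Your route.} You induct on $n$ by deleting the last term, using the transition $e_{i+1}=|e_i-e'_i|$. This is exactly the bookkeeping of Proposition~\ref{prop:criterion} applied to the prefix, with $r_0=e_1$ and fold parameters $e'_i$. It inverts as $e'_i=e_i\pm e_{i+1}$, so wherever $e_{i+1}=0$ you get $e'_i=e_i$, and the pattern $(2,0,\ldots,0,1)$ passes to the prefix. The final entry $e'_{n-2}=1$ is forced by nonnegativity, or by the Gilbreath property.

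\textbf{What each buys.} Your argument is shorter and works entirely with anti-diagonal states, matching the state-based enumeration in Section~\ref{sec:code}. The inversion $e'_i=e_i\pm e_{i+1}$ is also a reusable tool for other questions about which sequences can have a given anti-diagonal. The paper's argument avoids induction on length and reads the gaps off directly inside a single triangle.

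\textbf{Small points to tidy.}
\begin{itemize}
\item Say explicitly that $S'\in\G_{n-1}$: a prefix keeps the start $(2,3)$ and strict monotonicity, and its left diagonal is part of that of $S$.
\item The base case $n=3$ needs no enumeration: $e_1=s_3-s_2=2$ gives $s_3=5$ directly.
\item Drop the aside about starting at $n=2$. The pattern $(2,0,\ldots,0,1)$ is not defined there, and your inductive step as written does not cover the passage from $n=3$ to $n=2$.
\item Remove the false-start formula in your first paragraph.
\end{itemize}
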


\begin{proof}
Write $g_j := s_{j+1}-s_j$ for the gaps of $S$. Since
$(s_1,s_2)=(2,3)$, we have $g_1=1$. Also
$g_{n-1}=s_n-s_{n-1}=e_1=2$.

We prove by descending induction that
\[
 g_j=2 \qquad (2\leq j\leq n-1).
\]
The base case $j=n-1$ was just proved. Now suppose $2\leq j\leq n-2$
and assume inductively that
\[
 g_{j+1}=g_{j+2}=\cdots=g_{n-1}=2.
\]
Then the row-$1$ entries strictly to the right of position $j$ are all
$2$. Hence, by induction on the row index, every entry $s_a^b$ with
$a\geq j+1$, $b\geq 2$, and $a+b\leq n$ is zero: in row $2$ these are
absolute differences of equal row-$1$ entries, and in higher rows they
are absolute differences of zeros.

Now use the anti-diagonal hypothesis at index $i=n-j$. Since
$e_i=s_{n-i}^i$, we have
\[
 e_{n-j}=s_j^{n-j}=0.
\]
We propagate this zero upward to row $2$. For each
$m=2,3,\ldots,n-j-1$, the entry $s_{j+1}^{m}$ is zero by the preceding
paragraph, and the recurrence gives
\[
 s_j^{m+1}=|s_{j+1}^{m}-s_j^{m}|=|0-s_j^{m}|=s_j^{m}.
\]
If \(n-j=2\), this already says \(s_j^2=0\). Otherwise, applying the
identity successively for
\[
m=n-j-1,\ n-j-2,\ \ldots,\ 2,
\]
gives
\[
s_j^{n-j}=s_j^{n-j-1}=\cdots=s_j^2.
\]
Since \(s_j^{n-j}=0\), it follows in all cases that \(s_j^2=0\).
But
\[
 s_j^2=|s_{j+1}^1-s_j^1|=|g_{j+1}-g_j|=|2-g_j|.
\]
Therefore $g_j=2$, completing the descending induction.

Thus $g_1=1$ and $g_2=\cdots=g_{n-1}=2$, so
\[
 S=(2,3,5,7,\ldots,2n-1)=L_n.
\]
\end{proof}

\begin{proof}[Proof of Theorem~\ref{thm:min}]
By Lemma~\ref{lem:minD}, $|D_S| \geq 3$, so $|K_S| \geq 2 \cdot 3 - 1
= 5$. Equality requires $0 \in D_S$ and $|D_S| = 3$, which by
Lemma~\ref{lem:equality} forces the anti-diagonal
$(2, 0, \ldots, 0, 1)$, which by Lemma~\ref{lem:AP} forces
$S = L_n$. Conversely, $L_n$ has this anti-diagonal,
and the reverse tree gives $D_{L_n} = \{0, 2, 4\}$, hence
$|K_{L_n}| = 5$.
\end{proof}

\section{The doubling sequence and the maximum-width conjecture}\label{sec:max}

\begin{theorem}[Extension width of the doubling sequence]\label{thm:doubling}
For every $n \geq 2$, $|K_{U_n}| = 2^{n-1} + 1$.
\end{theorem}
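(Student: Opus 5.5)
The plan is to reduce the statement to Corollary~\ref{cor:ic-doubling} together with an explicit computation of the anti-diagonal of $U_n$. Corollary~\ref{cor:ic-doubling} already asserts $|K_{U_n}| = A(U_n)+2 = 2^{n-1}+1$, so the work is to make sure each ingredient is fully justified for all $n\geq 2$. We also need to verify that $U_n$ actually lies in $\G_n$, which the corollary takes for granted.

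First, I will verify the structure of the difference triangle of $U_n=(2,3,5,\ldots,2^{n-1}+1)$ by induction on the row index $b\geq 1$. The claim is that row $b$ equals $(1,2,4,\ldots,2^{n-b-1})$.
\begin{itemize}
\item For row $1$, the gaps are $(2^{j}+1)-(2^{j-1}+1)=2^{j-1}$ for $j\geq 1$, and the first gap is $3-2=1$. So row $1$ is $(1,2,4,\ldots,2^{n-2})$.
\item If row $b$ is $(1,2,\ldots,2^{n-b-1})$, then consecutive absolute differences are $|2^{m+1}-2^m|=2^m$. Hence row $b+1$ is $(1,2,\ldots,2^{n-b-2})$.
\end{itemize}
This shows every row below the top begins with $1$, so $U_n$ is Gilbreath. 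It is strictly increasing and starts with $(2,3)$, hence $U_n\in\G_n$. The right anti-diagonal is read off as the last entry of each row: $e_i = s_{n-i}^i = 2^{n-i-1}$. The case $n=2$ gives $(e_1)=(1)$, consistent with this formula.

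Next, I apply Theorem~\ref{thm:criterion}. For $1\le i\le n-2$ we have $1+\sum_{j>i}e_j = 1+\sum_{m=0}^{n-i-2}2^m = 2^{n-i-1} = e_i$. So the criterion holds with equality and $K_{U_n}=C_{U_n}$. For $n=2$ the criterion is vacuous. Lemma~\ref{lem:CS-count} then gives $|K_{U_n}| = A(U_n)+2$, and $A(U_n)=\sum_{i=1}^{n-1}2^{n-i-1}=2^{n-1}-1$. Therefore $|K_{U_n}| = 2^{n-1}+1$.

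There is no serious obstacle. The only step needing care is the row-by-row induction establishing that the triangle of $U_n$ consists of powers of two. This step is what certifies both $U_n\in\G_n$ and the anti-diagonal formula. The remaining edge cases $n=2,3$ can be checked directly against the list in Theorem~\ref{thm:hole}: $(2,3)$ gives $|K|=3$ and $(2,3,5)$ gives $|K|=5$.
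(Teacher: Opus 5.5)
Your proposal is correct and follows essentially the same route as the paper. The theorem is deduced from Corollary~\ref{cor:ic-doubling}, whose proof computes the powers-of-two triangle, reads off the anti-diagonal $(2^{n-2},\ldots,2,1)$, verifies the criterion of Theorem~\ref{thm:criterion} with equality, and concludes via $|C_S|=A(S)+2$ with $A(U_n)=2^{n-1}-1$. Your explicit check that $U_n\in\G_n$ is a small addition that the paper leaves implicit.
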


\begin{proof}
By Corollary~\ref{cor:ic-doubling}.
\end{proof}

\begin{conjecture}[Maximum width]\label{conj:max}
For every $n \geq 2$, $\max_{S \in \G_n} |K_S| = 2^{n-1} + 1$, uniquely
achieved by $U_n$.
\end{conjecture}

This conjecture is verified by exhaustive computation for $n \leq 10$;
see Table~\ref{tab:data}. A proof would have to use the fact that the
anti-diagonal arises from a Gilbreath sequence, not from arbitrary
nonnegative integers, since the reverse-tree analysis alone permits
anti-diagonals exceeding the doubling bound.

\section{An exponentially disconnected family}\label{sec:Vn}

For a finite set $X \subseteq \Z$ all of one parity class,
$\#\textup{comp}(X)$ denotes the number of maximal runs of common
difference $2$.

\begin{definition}
For $n \geq 5$, set $V_n := (v_1, \ldots, v_n)$ where $v_1 = 2$,
$v_i = 2^{i-1} + 1$ for $2 \leq i \leq n - 1$, and $v_n = 2^{n-1} - 1$.
\end{definition}

So $V_n$ follows the doubling through position $n - 1$ and then
undershoots at the last position by $2$: $V_5 = (2,3,5,9,15)$,
$V_6 = (2,3,5,9,17,31)$, $V_7 = (2,3,5,9,17,33,63)$.

\begin{example}\label{ex:V5-V6}
For $V_5$, the reverse tree gives $D_5 = \{2, 4, 6, 8, 10\}$, a single
block of $5$ consecutive even integers. For $V_6$, the right
anti-diagonal is $(14, 6, 2, 0, 1)$; running the reverse tree, the
preimage step $P_{14}$ applied to $D_5$ splits each element into two
preimages $14 - d$ and $14 + d$, giving
$D_6 = \{4, 6, 8, 10, 12\} \cup \{16, 18, 20, 22, 24\}$, two
$5$-element blocks separated by a gap.
Figure~\ref{fig:Vn-doubling} shows the first three stages.
\end{example}

\begin{figure}[htbp]
\centering
\begin{tikzpicture}[
    x=0.14cm,
    y=0.85cm,
    every node/.style={font=\footnotesize},
    block/.style={fill=blue!25, draw=blue!70!black, thick, rounded corners=1pt},
    gridline/.style={gray!35, thin},
]

\def\yaxis{0.4}
\def\yDfive{3.4}
\def\yDsix{2.4}
\def\yDseven{1.4}
\def\halfheight{0.22}

\foreach \x in {0,10,20,30,40,50,60}{
    \draw[gridline] (\x,\yaxis) -- (\x,4.1);
    \draw (\x,\yaxis) -- (\x,0.18);
    \node[below=2pt] at (\x,0.18) {$\x$};
}
\draw[->, thick] (-1,\yaxis) -- (62.5,\yaxis);

\node[anchor=east, font=\small] at (-3,\yDfive) {$D_5$};
\node[anchor=east, font=\small] at (-3,\yDsix) {$D_6$};
\node[anchor=east, font=\small] at (-3,\yDseven) {$D_7$};

\draw[block] (2,\yDfive-\halfheight) rectangle (10,\yDfive+\halfheight);

\draw[block] (4,\yDsix-\halfheight) rectangle (12,\yDsix+\halfheight);
\draw[block] (16,\yDsix-\halfheight) rectangle (24,\yDsix+\halfheight);

\draw[block] (6,\yDseven-\halfheight) rectangle (14,\yDseven+\halfheight);
\draw[block] (18,\yDseven-\halfheight) rectangle (26,\yDseven+\halfheight);
\draw[block] (34,\yDseven-\halfheight) rectangle (42,\yDseven+\halfheight);
\draw[block] (46,\yDseven-\halfheight) rectangle (54,\yDseven+\halfheight);

\node[anchor=west, font=\scriptsize] at (57,\yDfive) {1 component};
\node[anchor=west, font=\scriptsize] at (57,\yDsix) {2 components};
\node[anchor=west, font=\scriptsize] at (57,\yDseven) {4 components};

\draw[dashed, gray!70, thick] (14,\yDsix-0.55) -- (14,\yDfive+0.55);
\node[above, color=gray!70!black, font=\scriptsize] at (14,4.08) {$E_6=14$};

\draw[dashed, gray!70, thick] (30,\yDseven-0.55) -- (30,\yDsix+0.55);
\node[above, color=gray!70!black, font=\scriptsize] at (30,3.08) {$E_7=30$};

\end{tikzpicture}
\caption{Component doubling in the family $V_n$
(Theorem~\ref{thm:Vn}). Each horizontal block represents a maximal run
of even integers spaced by $2$. The recursion
$D_n=P_{E_n}(D_{n-1})$, with $E_n=2^{n-2}-2$, sends $D_{n-1}$
to two separated reflected copies $E_n-D_{n-1}$ and $E_n+D_{n-1}$.
Thus the number of components doubles at each step.}
\label{fig:Vn-doubling}
\end{figure}

\begin{lemma}\label{lem:Vn-ad}
For $n \geq 5$, $V_n \in \G_n$ with right anti-diagonal $e_i = 2^{n-i-1}
- 2$ for $1 \leq i \leq n - 2$ and $e_{n-1} = 1$.
\end{lemma}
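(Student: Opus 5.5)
The plan is to compute the difference triangle of $V_n$ explicitly. The first $n-1$ terms coincide with $U_{n-1}$, and the triangle of $U_{n-1}$ is known from the proof of Corollary~\ref{cor:ic-doubling}: row $b\geq 1$ is $(1,2,4,\ldots,2^{n-b-2})$. In particular, the left diagonal of $V_n$ below the top row consists of $1$'s in rows $1,\ldots,n-2$. Since $V_n$ is strictly increasing and begins with $(2,3)$, membership $V_n\in\G_n$ reduces to two facts: the new last column should have the claimed entries, and its bottom entry $s_1^{n-1}$ should equal $1$.

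I will therefore compute the new right-edge entries $e_i=s_{n-i}^i$ by exactly the recursion of Proposition~\ref{prop:criterion}. Here the ``old'' right anti-diagonal is that of $U_{n-1}$, namely $(2^{n-3},2^{n-4},\ldots,2,1)$, and the appended value is $v_n$. The recursion is
\[
r_0=v_n-v_{n-1}=(2^{n-1}-1)-(2^{n-2}+1)=2^{n-2}-2,\qquad r_i=|r_{i-1}-2^{n-2-i}|.
\]
I will show by induction on $i$, for $0\le i\le n-3$, that $r_i=2^{n-2-i}-2$.

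The inductive step is
\[
r_i=\bigl|(2^{n-1-i}-2)-2^{n-2-i}\bigr|=2^{n-2-i}-2,
\]
which holds with no sign flip as long as $2^{n-2-i}\ge 2$, i.e.\ $i\le n-3$. This gives the claim $e_{i+1}=r_i=2^{n-i-2}-2$, matching $e_i=2^{n-i-1}-2$ for $1\le i\le n-2$.

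The last step uses the old bottom entry $1$. At $i=n-3$ we have $r_{n-3}=2^{1}-2=0$, and then $r_{n-2}=|0-1|=1$. This is the new bottom entry $s_1^{n-1}$, so it gives both Gilbreath-ness and $e_{n-1}=1$.

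The main point needing care is the indexing: matching $r_{i-1}$ with $e_i$ of $V_n$, versus the old anti-diagonal of $U_{n-1}$ indexed $1,\ldots,n-2$. A secondary check is that the hypothesis $n\ge 5$ ensures that $v_n>v_{n-1}$, i.e.\ $2^{n-2}>2$, and that the range $n-3\ge 2$ is nondegenerate. As a sanity check, for $n=6$ the formula gives $(14,6,2,0,1)$, matching Example~\ref{ex:V5-V6}.
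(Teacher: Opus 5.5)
Your proof is correct and follows essentially the same route as the paper. Both arguments use that the first $n-1$ terms form $U_{n-1}$, whose right anti-diagonal is $(2^{n-3},\ldots,2,1)$. Both then push the last gap $2^{n-2}-2$ down the new right edge, with no sign flip until the final step $|0-1|=1$. Your version is somewhat more explicit: it spells out the induction, the index shift $e_{i+1}=r_i$, and the strict-increase and left-diagonal checks that the paper leaves implicit.
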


\begin{proof}
The first $n - 1$ terms of $V_n$ form the doubling sequence $U_{n-1}$,
whose triangle is the powers-of-two array. The final gap is
$v_n - v_{n-1} = (2^{n-1} - 1) - (2^{n-2} + 1) = 2^{n-2} - 2$,
giving $e_1 = 2^{n-2} - 2$. The rightmost entry of $U_{n-1}$'s row $1$
is $2^{n-3}$, so $e_2 = |2^{n-2} - 2 - 2^{n-3}| = 2^{n-3} - 2$.
Iterating, $e_i = 2^{n-i-1} - 2$ for $1 \leq i \leq n - 2$; in
particular $e_{n-2} = 0$. Finally $e_{n-1} = |1 - 0| = 1$.
\end{proof}

Throughout this section write $D_n := D_{V_n}$ for the distance set of
$V_n$, and recall the preimage map $P_e$ of Section~\ref{sec:reverse}.
By Lemma~\ref{lem:Vn-ad} the right anti-diagonal of $V_n$ is
$(e_1, \ldots, e_{n-1}) = (2^{n-2} - 2,\, 2^{n-3} - 2,\, \ldots,\, 2,\,
0,\, 1)$, and the anti-diagonal of $V_{n}$ is precisely that of
$V_{n-1}$ with one new leading entry $E_n := 2^{n-2} - 2$ prepended.
Consequently the reverse-tree process (Proposition~\ref{prop:reverse})
for $V_n$ is the process for $V_{n-1}$ followed by one additional
preimage step:
\begin{equation}\label{eq:Dn-recursion}
D_n = P_{E_n}(D_{n-1}), \qquad E_n = 2^{n-2} - 2 \quad (n \geq 6).
\end{equation}

We first isolate the arithmetic of the extremes of $D_n$, since the
component count and cardinality both depend on it.

\begin{lemma}[Extremes and structure of $D_n$]\label{lem:Dn-extremes}
For every $n \geq 5$ the set $D_n$ consists of positive even integers,
and
\[
\min D_n = 2n - 8, \qquad \max D_n = 2^{n-1} - 2n + 4 .
\]
Moreover, for $n \geq 6$ the recursion \eqref{eq:Dn-recursion} acts as
two disjoint reflected copies:
\begin{equation}\label{eq:Dn-split}
D_n = (E_n - D_{n-1}) \,\sqcup\, (E_n + D_{n-1}),
\end{equation}
where every element of $E_n - D_{n-1}$ is strictly smaller than every
element of $E_n + D_{n-1}$.
\end{lemma}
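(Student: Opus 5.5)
We argue by induction on $n\geq 5$, proving the extreme formulas and the split \eqref{eq:Dn-split} together. The base case $n=5$ is the computation already done in Theorem~\ref{thm:hole}. There $D_5=\{2,4,6,8,10\}$ consists of positive even integers, with $\min D_5=2=2\cdot5-8$ and $\max D_5=10=2^4-10+4$.

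For the inductive step, fix $n\geq 6$ and assume $D_{n-1}$ consists of positive even integers with $\min D_{n-1}=2n-10$ and $\max D_{n-1}=2^{n-2}-2n+6$. By \eqref{eq:Dn-recursion}, $D_n=P_{E_n}(D_{n-1})$ with $E_n=2^{n-2}-2$. The first step is to check that the lower branch of $P_{E_n}$ is never truncated, that is, $\max D_{n-1}\leq E_n$. This reads $2^{n-2}-2n+6\leq 2^{n-2}-2$, i.e.\ $n\geq 4$, and in fact gives the strict inequality $\max D_{n-1}<E_n$. Hence
\[
P_{E_n}(D_{n-1})=(E_n-D_{n-1})\cup(E_n+D_{n-1}),
\]
and every element of $E_n-D_{n-1}$ is at least $E_n-\max D_{n-1}=2n-8>0$.

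Next we separate the two copies. Every element of $E_n-D_{n-1}$ is at most $E_n-\min D_{n-1}$. Every element of $E_n+D_{n-1}$ is at least $E_n+\min D_{n-1}$. Since $\min D_{n-1}=2n-10>0$ for $n\geq 6$, the first copy lies strictly below $E_n$ and the second strictly above. This gives both the disjointness and the ordering claimed in \eqref{eq:Dn-split}. Evenness is inherited because $E_n$ is even. The extremes are then read off directly:
\[
\min D_n=E_n-\max D_{n-1}=2n-8,
\qquad
\max D_n=E_n+\max D_{n-1}=2^{n-1}-2n+4.
\]
This closes the induction.

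The only delicate point is to carry $\min D_{n-1}>0$ inside the induction hypothesis. It is needed twice: to make the two reflected copies disjoint (a zero in $D_{n-1}$ would put $E_n$ in both copies), and to make $\min D_n$ positive. We also need $\max D_{n-1}\leq E_n$ so that no element of $D_{n-1}$ is dropped by the condition $e\geq t$ in the definition of $P_e$. Both facts are exactly the two extreme formulas, which is why they are proved jointly with the split. I expect no real obstacle beyond keeping this bookkeeping straight.
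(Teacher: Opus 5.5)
Your proposal is correct and follows the paper's proof essentially step for step. You use the same induction from $D_5=\{2,4,6,8,10\}$, the same check $E_n-\max D_{n-1}=2n-8>0$ so the lower branch is never truncated, the same separation of the two reflected copies on either side of $E_n$ using positivity of $D_{n-1}$, and the same readout $\min D_n=E_n-\max D_{n-1}$ and $\max D_n=E_n+\max D_{n-1}$.
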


\begin{proof}
We argue by induction on $n$.

\emph{Base case $n = 5$.} The reverse-tree computation in
Example~\ref{ex:V5-V6} gives $D_5 = \{2, 4, 6, 8, 10\}$, all positive
and even, with $\min D_5 = 2 = 2(5) - 8$ and
$\max D_5 = 10 = 2^{4} - 2(5) + 4$. This establishes the base case.
(The split \eqref{eq:Dn-split} is asserted only for $n \geq 6$.)

\emph{Inductive step.} Fix $n \geq 6$ and assume the statement for
$n - 1$; in particular $D_{n-1}$ consists of positive even integers with
\begin{equation}\label{eq:IH}
\min D_{n-1} = 2n - 10, \qquad
\max D_{n-1} = 2^{n-2} - 2n + 6 .
\end{equation}

Recall that
\[
P_{E_n}(D_{n-1}) = \{E_n + d : d \in D_{n-1}\}
\;\cup\; \{E_n - d : d \in D_{n-1},\ E_n \geq d\}.
\]
We first show the second branch is unconditional, i.e.\ that
$E_n \geq d$ for every $d \in D_{n-1}$. It suffices to check
$E_n \geq \max D_{n-1}$. Using \eqref{eq:IH},
\begin{equation}\label{eq:gap-positive}
E_n - \max D_{n-1}
= (2^{n-2} - 2) - (2^{n-2} - 2n + 6)
= 2n - 8 \;\geq\; 4 \;>\; 0,
\end{equation}
since $n \geq 6$. Hence $E_n > \max D_{n-1} \geq d$ for all
$d \in D_{n-1}$, so both branches are active and
$D_n = (E_n - D_{n-1}) \cup (E_n + D_{n-1})$.

\emph{Disjoint and ordered.} Every element of the lower branch
satisfies $E_n - d < E_n$ (as $d > 0$), while every element of the
upper branch satisfies $E_n + d > E_n$. Hence each element of
$E_n - D_{n-1}$ is strictly below $E_n$ and each element of
$E_n + D_{n-1}$ strictly above, giving \eqref{eq:Dn-split} with the
claimed ordering; in particular the union is disjoint.

\emph{Parity and positivity.} Each $d \in D_{n-1}$ is even and $E_n =
2^{n-2} - 2$ is even, so $E_n \pm d$ is even. The smallest element of
$D_n$ is $E_n - \max D_{n-1} = 2n - 8 > 0$ by \eqref{eq:gap-positive};
hence all elements of $D_n$ are positive.

\emph{New extremes.} By the ordering in \eqref{eq:Dn-split},
\[
\min D_n = E_n - \max D_{n-1} = 2n - 8,
\]
\[
\max D_n = E_n + \max D_{n-1}
= (2^{n-2} - 2) + (2^{n-2} - 2n + 6) = 2^{n-1} - 2n + 4 .
\]
These are the claimed formulas at $n$, completing the induction.
\end{proof}

With the structure of $D_n$ in hand, the main theorem follows.

\begin{theorem}[Exponentially many components]\label{thm:Vn}
For every $n \geq 5$,
\[
|K_{V_n}| = 5 \cdot 2^{n-4},
\quad
\#\textup{comp}(K_{V_n}) = 2^{n-4},
\quad
h(V_n) = 3 \cdot 2^{n-4} - 2n + 5.
\]
\end{theorem}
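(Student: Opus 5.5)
The approach is to work with the valid distance set $D_n=D_{V_n}$ and then pass to $K_{V_n}$ through the reflection of Proposition~\ref{prop:reverse}. The reflection doubles both the cardinality and the number of runs. The key inputs are the recursion \eqref{eq:Dn-recursion} and Lemma~\ref{lem:Dn-extremes}. The target statements for the distance set are
\[
|D_n| = 5\cdot 2^{n-5}, \qquad \#\textup{comp}(D_n) = 2^{n-5},
\]
which we prove by induction on $n \geq 5$. Here runs in $D_n$ are maximal runs of common difference $2$ among even integers.

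\emph{Base case.} Example~\ref{ex:V5-V6} gives $D_5=\{2,4,6,8,10\}$, a single run of $5$ elements.

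\emph{Cardinality step ($n\ge 6$).} By \eqref{eq:Dn-split}, $D_n$ is the disjoint union of $E_n - D_{n-1}$ and $E_n + D_{n-1}$. Both maps $d\mapsto E_n\pm d$ are injective, so $|D_n| = 2|D_{n-1}|$.

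\emph{Component step ($n\ge 6$).} Both maps $d\mapsto E_n\pm d$ are isometries of the even lattice, so each copy has exactly $\#\textup{comp}(D_{n-1})$ runs. It remains to check that the two copies do not merge into a single run at the junction. The largest element of the lower copy is $E_n - \min D_{n-1}$, and the smallest element of the upper copy is $E_n + \min D_{n-1}$. These differ by
\[
2\min D_{n-1} = 2(2n-10) = 4n-20 \geq 4 > 2.
\]
The copies are therefore separated by at least one missing even integer, and $\#\textup{comp}(D_n) = 2\,\#\textup{comp}(D_{n-1})$. This junction check, which uses $\min D_{n-1}=2n-10\ge 2$ from Lemma~\ref{lem:Dn-extremes}, is the only delicate point. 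At $n=6$ it reads $4\geq 4$, so the bound is just strong enough.

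\emph{Passing to $K_{V_n}$.} By Proposition~\ref{prop:reverse} and Theorem~\ref{thm:symmetry},
\[
K_{V_n} = (v_n - D_n)\cup(v_n + D_n).
\]
Lemma~\ref{lem:Dn-extremes} gives $0\notin D_n$, so the two halves are disjoint and
\[
|K_{V_n}| = 2|D_n| = 5\cdot 2^{n-4}.
\]
The same junction argument applies at the centre $v_n$. The closest elements of the two halves are $v_n \pm \min D_n$, which differ by
\[
2\min D_n = 2(2n-8) \geq 4 > 2,
\]
so no run crosses $v_n$. Hence
\[
\#\textup{comp}(K_{V_n}) = 2\cdot 2^{n-5} = 2^{n-4}.
\]
As a check at $n=5$: $K_{V_5}$ consists of the two runs $5..13$ and $17..25$, totalling $10$ elements.

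\emph{Defect.} From Lemma~\ref{lem:Vn-ad},
\[
A(V_n) = \sum_{i=1}^{n-2}\bigl(2^{n-i-1}-2\bigr) + 1 = 2^{n-1} - 2n + 3.
\]
Lemma~\ref{lem:CS-count} then gives $|C_{V_n}| = 2^{n-1} - 2n + 5$. Therefore
\[
h(V_n) = |C_{V_n}| - |K_{V_n}| = 8\cdot 2^{n-4} - 5\cdot 2^{n-4} - 2n + 5 = 3\cdot 2^{n-4} - 2n + 5,
\]
which is the stated value.
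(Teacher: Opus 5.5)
Your proof is correct and follows the paper's argument essentially step for step. Like the paper, you induct on $|D_n|=5\cdot 2^{n-5}$ and $\#\textup{comp}(D_n)=2^{n-5}$ via the split \eqref{eq:Dn-split} with junction gap $2\min D_{n-1}\ge 4$, pass to $K_{V_n}$ by reflection about $v_n$ with gap $2\min D_n\ge 4$, and obtain the defect from $A(V_n)=2^{n-1}-2n+3$.
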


\begin{proof}
We first show, by induction on $n \geq 5$, that
\begin{equation}\label{eq:Dn-card-comp}
|D_n| = 5 \cdot 2^{n-5}
\qquad\text{and}\qquad
\#\textup{comp}(D_n) = 2^{n-5}.
\end{equation}
For $n = 5$, $D_5 = \{2,4,6,8,10\}$ has $|D_5| = 5 = 5 \cdot 2^0$ and is
a single run, so $\#\textup{comp}(D_5) = 1 = 2^0$. For $n \geq 6$,
Lemma~\ref{lem:Dn-extremes} gives the disjoint union
\eqref{eq:Dn-split}; since $d \mapsto E_n + d$ and $d \mapsto E_n - d$
are injective, each copy has $|D_{n-1}|$ elements, so $|D_n| = 2
|D_{n-1}| = 5 \cdot 2^{n-5}$.

For the component count, the gap separating the two copies in
\eqref{eq:Dn-split} is
\[
\big(\min(E_n + D_{n-1})\big) - \big(\max(E_n - D_{n-1})\big)
= (E_n + \min D_{n-1}) - (E_n - \min D_{n-1})
= 2 \min D_{n-1} \geq 4
\]
by Lemma~\ref{lem:Dn-extremes} ($\min D_{n-1} = 2n - 10 \geq 2$ for
$n \geq 6$). A gap of at least $4$ between consecutive even integers
breaks the run, so the two copies lie in distinct parity-lattice
components, and within each copy the reflection $d \mapsto E_n \pm d$
preserves adjacency of common difference $2$. Hence
$\#\textup{comp}(D_n) = 2\,\#\textup{comp}(D_{n-1}) = 2^{n-5}$, proving
\eqref{eq:Dn-card-comp}.

By Lemma~\ref{lem:Dn-extremes}, $\min D_n = 2n - 8 > 0$, so $0 \notin
D_n$ and the reflection of Theorem~\ref{thm:symmetry} produces two
disjoint translated copies
$K_{V_n} = (v_n - D_n) \,\sqcup\, (v_n + D_n)$, separated by a gap of
$2\min D_n \geq 4$. Hence
\[
|K_{V_n}| = 2|D_n| = 5 \cdot 2^{n-4},
\qquad
\#\textup{comp}(K_{V_n}) = 2\,\#\textup{comp}(D_n) = 2^{n-4}.
\]

Finally, by Lemma~\ref{lem:Vn-ad},
\[
A(V_n) = \sum_{i=1}^{n-2}\!\big(2^{n-i-1} - 2\big) + 1
= (2^{n-1} - 2) - 2(n - 2) + 1 = 2^{n-1} - 2n + 3,
\]
so by Lemma~\ref{lem:CS-count} the candidate set has size
$A(V_n) + 2 = 2^{n-1} - 2n + 5$, and
\[
h(V_n) = |C_{V_n}| - |K_{V_n}|
= (2^{n-1} - 2n + 5) - 5 \cdot 2^{n-4}
= 3 \cdot 2^{n-4} - 2n + 5. \qedhere
\]
\end{proof}

\begin{corollary}\label{cor:Vn-lower}
For every $n \geq 5$,
\[
\max_{S \in \G_n} \#\textup{comp}(K_S) \geq 2^{n-4}
\quad \text{and} \quad
\max_{S \in \G_n} h(S) \geq 3 \cdot 2^{n-4} - 2n + 5.
\]
\end{corollary}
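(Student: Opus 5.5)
The corollary follows by exhibiting a single witness in $\G_n$ for each $n\geq 5$, namely $V_n$, so the plan is to reduce both inequalities to Theorem~\ref{thm:Vn}. First I would confirm that $V_n$ is a legitimate member of the family over which the maximum is taken. Lemma~\ref{lem:Vn-ad} already gives the Gilbreath property. It remains to check the other two conditions in the definition of $\G_n$:
\begin{itemize}
\item the initial pair is $(v_1,v_2)=(2,2^1+1)=(2,3)$;
\item the sequence is strictly increasing.
\end{itemize}
Strict increase holds on the doubling prefix $v_2<\cdots<v_{n-1}$ automatically, and also $v_1=2<3=v_2$. The only nontrivial comparison is the last one, $v_n-v_{n-1}=2^{n-2}-2$, which is positive exactly when $n\geq 3$, so in particular for $n\geq 5$. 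Hence $V_n\in\G_n$.

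With membership established, the bounds follow immediately from the definition of a maximum:
\[
\max_{S\in\G_n}\#\textup{comp}(K_S)\geq \#\textup{comp}(K_{V_n})=2^{n-4},
\qquad
\max_{S\in\G_n}h(S)\geq h(V_n)=3\cdot 2^{n-4}-2n+5,
\]
where both equalities are those of Theorem~\ref{thm:Vn}.

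There is no real obstacle here. The only step that needs care is the membership check, in particular that the undershoot $v_n=2^{n-1}-1$ still exceeds $v_{n-1}=2^{n-2}+1$, which is the positivity of the final gap noted above. As a sanity remark, I would also note that the defect bound is positive and grows exponentially. At $n=5$ it equals $1$, matching Theorem~\ref{thm:hole}, and the term $3\cdot 2^{n-4}$ dominates the linear term $2n$.
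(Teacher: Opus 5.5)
Your proposal is correct and matches the paper, which treats the corollary as immediate from Theorem~\ref{thm:Vn} with $V_n$ as the witness; membership $V_n\in\G_n$ is already asserted in Lemma~\ref{lem:Vn-ad}, so your separate check is redundant but harmless. One small slip: the final gap $2^{n-2}-2$ is positive exactly when $n\geq 4$, not $n\geq 3$ (it vanishes at $n=3$), which does not matter for $n\geq 5$.
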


The first inequality is sharp for $n \leq 10$ (by exhaustive
computation); the second is not, in general.

\section{Computational data}\label{sec:data}

Table~\ref{tab:data} records enumeration data for \(\G_n\) computed using
the corrected extension test, as well as
extremal-width data for $2 \leq n \leq 10$, with $N_{11}$ included
since it is computable from the $\G_{10}$ frontier. The data was
generated using the reverse-tree algorithm of
Proposition~\ref{prop:reverse}; the code in Section~\ref{sec:code}
reproduces the full table.

\begin{table}[htbp]
\centering
\small
\begin{tabular}{c r r c r c r r r r}
\toprule
$n$ & $N_n$ & $m_n$ & \#min & $M_n$ & \#max & \#i.c. & \#defective & max def & max comp \\
\midrule
2  & 1               & 3 & 1 & 3   & 1 & 1         & 0       & 0   & 1 \\
3  & 1               & 5 & 1 & 5   & 1 & 1         & 0       & 0   & 1 \\
4  & 2               & 5 & 1 & 9   & 1 & 2         & 0       & 0   & 1 \\
5  & 6               & 5 & 1 & 17  & 1 & 5         & 1       & 1   & 2 \\
6  & 27              & 5 & 1 & 33  & 1 & 22        & 5       & 5   & 4 \\
7  & 180             & 5 & 1 & 65  & 1 & 120       & 60      & 15  & 8 \\
8  & 1{,}786           & 5 & 1 & 129 & 1 & 1{,}026     & 760     & 47  & 16 \\
9  & 26{,}094          & 5 & 1 & 257 & 1 & 12{,}782    & 13{,}312  & 121 & 32 \\
10 & 559{,}127         & 5 & 1 & 513 & 1 & 237{,}073   & 322{,}054 & 281 & 64 \\
11 & 17{,}535{,}396 & -- & -- & -- & -- & --        & --      & --  & -- \\
\bottomrule
\end{tabular}
\caption{Enumeration data for $\G_n$ computed using the corrected
extension test, together with extremal extension-width data. Here
``\#i.c.'' is the count of interval-complete sequences ($h(S)=0$),
``\#defective'' is the count with $h(S)>0$, and components are counted
in the parity lattice. For $n=11$ only $N_{11}$ is recorded;
per-sequence statistics for $n=11$ were not enumerated.}
\label{tab:data}
\end{table}

For $2 \leq n \leq 10$ the unique maximizer of $|K_S|$ is the doubling
sequence $U_n$, and the unique minimizer is $L_n$. The
fraction of defective sequences grows from $1/6 \approx 17\%$ at
$n = 5$ to $322054/559127 \approx 57.6\%$ at $n = 10$.

\subsection{OEIS connections}\label{sec:oeis}

The enumeration $N_n = |\G_n|$ coincides, after an index shift, with
OEIS~\cite{A080839} (``number of positive increasing integer sequences
of length $n$ with Gilbreath transform $(1,1,1,\ldots)$''), whose terms
are $1, 1, 1, 2, 6, 27, 180, 1786, 26094, 559127, 17535396, \ldots$.
This provides an independent confirmation of our corrected values
$N_2, \ldots, N_{11} = 1, 1, 2, 6, 27, 180, 1786, 26094, 559127,
17535396$, and we attribute the enumeration to that entry rather than
claiming it as new. A comment of T.~D.~Noe on~\cite{A080839} further
records that the extremal (slowest- and fastest-growing) length-$n$
sequences are the minimal sequence and the doubling sequence,
consistent with our Theorems~\ref{thm:min} and~\ref{thm:doubling}.

By contrast, we did not find OEIS entries matching the
interval-complete counts
\[
1, 1, 2, 5, 22, 120, 1026, 12782, 237073,
\]
the maximum-defect sequence
\[
0, 0, 0, 1, 5, 15, 47, 121, 281,
\]
or the $V_n$ extension-set width $5 \cdot 2^{n-4}$; to the best of our
knowledge the structural theory of $K_S$ developed here
(interval-completeness criterion, holes, defect, and the $V_n$ family)
is new. For broader context on the iterated-difference and difference-triangle
literature, related OEIS entries include A036262~\cite{A036262}
(the Gilbreath array of the primes), A036261~\cite{A036261}
(the corresponding iterated absolute differences), A054977~\cite{A054977}
(the conjectured leftmost column), A173816~\cite{A173816} (row sums),
and A347924--A347925~\cite{A347924,A347925} (Gatti polynomial coefficient
numerators and denominators). None of these coincide with the interval-complete,
defect, or component sequences above. As exhaustive sequence search is
delicate, we would welcome verification of these originality claims.

\section{Discussion}\label{sec:discussion}

The framework of this paper has three complementary readings.

From the additive-combinatorics side, the signed-sum set associated to
a finite Gilbreath sequence is a subset-sum set
(Theorem~\ref{thm:subset-reformulation}), and the coincidence
\(S_{\pm}=C_S\) is governed by the classical Brown completeness criterion
applied to the weight multiset \(W_S=\{e_1,\ldots,e_{n-1},1\}\). The
interval-completeness criterion of Theorem~\ref{thm:criterion} is the
ordered analogue of Brown's criterion: the same ``next weight is at
most one plus the sum of previous weights'' shape, but read in the
fixed anti-diagonal order forced by the folding recurrence.

From the dynamical-systems side, the valid distance set is the fiber
over the apex value \(1\) of an ordered composition of folding maps
\(x\mapsto |x-e_i|\). The reverse-tree algorithm of
Section~\ref{sec:reverse} solves the corresponding inverse problem
explicitly. This places the work alongside the Proth--Gilbreath
operator analysis of Bhat, Cobeli, and Zaharescu~\cite{bcz2024}, which
studies the forward dynamics of the same triangle. Our results
contribute the structural analysis of the inverse direction for finite
prefixes.

From the probabilistic side, the conjectures recorded in
Section~\ref{sec:open} ask for the asymptotic distribution of the
defect and component count over a uniformly random sequence
\(S\in\G_n\). These are the finite, deterministic counterparts of the
small-gap probabilistic questions resolved by Chase~\cite{chase}, who
studies whether infinite sequences with random small gaps are
Gilbreath. The framework of this paper makes such finite-distribution
questions concrete: each one is a statement about the distribution of
\(F_S^{-1}(\{1\})\) as \(S\) ranges over \(\G_n\).

In all three readings the central object is the same. The
interval-completeness theorem is simultaneously a sharp completeness
criterion for an ordered subset-sum problem, a structure theorem for
finite fibers of folding-map compositions, and a deterministic
companion to the random Gilbreath models studied recently.

\section{Open questions}\label{sec:open}

\begin{enumerate}[label=\textup{(\arabic*)}]
\item (Conjecture~\ref{conj:max}) Is $M_n = 2^{n-1} + 1$ for all $n$,
uniquely achieved by $U_n$?
\item Asymptotics of \(p_n:=\#\{S\in\G_n:h(S)=0\}/N_n\). Data through
\(n=10\) shows
\[
p_n=1,\,1,\,1,\,\tfrac{5}{6},\,\tfrac{22}{27},\,\tfrac{120}{180},\,\tfrac{1026}{1786},\,\tfrac{12782}{26094},\,\tfrac{237073}{559127}
\;\approx\;0.424.
\]
Does \(p_n\) tend to a limit? More generally, what are the asymptotic
distributions of \(h(S)\), \(|K_S|\), and \(\#\textup{comp}(K_S)\)
under uniform sampling on \(\G_n\)? These are finite, deterministic
analogues of the probabilistic Gilbreath questions resolved by
Chase~\cite{chase}.
\item Closed form for $\max_{S \in \G_n} h(S)$. The lower bound
$3 \cdot 2^{n-4} - 2n + 5$ from Corollary~\ref{cor:Vn-lower} is not
tight.
\item Is $\max_{S \in \G_n} \#\textup{comp}(K_S) = 2^{n-4}$ for all
$n \geq 5$? Verified for $n \leq 10$.
\item Stability classification near the minimum: characterize
$S \in \G_n$ with $|K_S| \leq 9$.
\end{enumerate}

\section{Reproducible code}\label{sec:code}

The following Python module computes the data in Table~\ref{tab:data}
using only the right anti-diagonal state rather than repeatedly storing
and rebuilding full difference triangles. This makes the computation
substantially faster than a direct triangle-based reference
implementation. The program also verifies the first-hole example and
computes \(N_{11}\) by summing the number of increasing valid extensions
from the length-\(10\) frontier.

\begin{lstlisting}
from functools import lru_cache

def preimage_step(e, T):
    """
    Preimages of T under x -> |x-e|, with x >= 0.
    For each t in T, the solutions are x=e+t and, if e>=t, x=e-t.
    """
    out = set()
    for t in T:
        out.add(e + t)
        if e >= t:
            out.add(e - t)
    return tuple(sorted(out))

@lru_cache(maxsize=None)
def valid_distances_from_antidiagonal(e_tuple):
    """
    Given the right anti-diagonal (e_1,...,e_{n-1}), return
    D_S = {|k-s_n| : k in K_S}.
    """
    T = (1,)
    for e in reversed(e_tuple):
        T = preimage_step(e, T)
    return T

def child_antidiagonal(e_tuple, d):
    """
    If d=|k-s_n| is a valid positive distance and k=s_n+d, return the
    right anti-diagonal after appending k.

    Old anti-diagonal: (e_1,...,e_{n-1}).
    New anti-diagonal: (d, |d-e_1|, ||d-e_1|-e_2|, ..., 1).
    """
    r = d
    new_e = [r]
    for e in e_tuple:
        r = abs(r - e)
        new_e.append(r)
    assert new_e[-1] == 1
    return tuple(new_e)

def width_from_distances(D):
    """
    Full extension width |K_S| from the distance set D.
    Distance 0 contributes one extension; each positive distance
    contributes two symmetric extensions.
    """
    return 2 * len(D) - (1 if 0 in D else 0)

def valid_extensions_from_state(sn, e_tuple):
    """
    Full two-sided valid-extension set K_S.
    """
    D = valid_distances_from_antidiagonal(e_tuple)
    out = set()
    for d in D:
        out.add(sn + d)
        out.add(sn - d)
    return tuple(sorted(out))

def candidate_set_from_state(sn, e_tuple):
    """
    Candidate set C_S.
    """
    A = sum(e_tuple)
    return tuple(k for k in range(sn - A - 1, sn + A + 2)
                 if (k - sn) % 2 == 0)

def is_interval_complete(e_tuple):
    """
    Check the criterion e_i <= 1 + sum_{j>i} e_j for all i<=n-2.
    Here e_tuple = (e_1,...,e_{n-1}).
    """
    tail_sum = e_tuple[-1]  # e_{n-1}=1
    for e in reversed(e_tuple[:-1]):
        if e > 1 + tail_sum:
            return False
        tail_sum += e
    return True

def components_count(vals, step=2):
    """
    Number of connected components in one parity lattice.
    """
    vals = sorted(set(vals))
    if not vals:
        return 0
    count = 1
    for a, b in zip(vals, vals[1:]):
        if b - a != step:
            count += 1
    return count

def K_components_count(sn, e_tuple):
    """
    Number of connected components of K_S in the parity lattice.
    """
    return components_count(valid_extensions_from_state(sn, e_tuple), step=2)

def generate_states(max_n):
    """
    Generate states for G_n up to max_n.

    A state is (s_n, e_tuple, seq), where:
      s_n     = last term,
      e_tuple = right anti-diagonal,
      seq     = full sequence, kept only for reporting examples.
    """
    states = [(3, (1,), (2, 3))]
    by_n = {2: states}

    for n in range(3, max_n + 1):
        next_states = []
        for sn, e_tuple, seq in states:
            D = valid_distances_from_antidiagonal(e_tuple)
            for d in D:
                if d > 0:  # increasing extension k=s_n+d
                    k = sn + d
                    new_e = child_antidiagonal(e_tuple, d)
                    next_states.append((k, new_e, seq + (k,)))
        states = next_states
        by_n[n] = states
        print(f"generated G_{n}: {len(states)} sequences")

    return by_n

def summarize_states(states):
    """
    Compute one row of the numerical data table.
    """
    N = len(states)

    min_width = None
    max_width = None
    num_min = 0
    num_max = 0

    num_complete = 0
    num_defective = 0
    max_defect = 0
    max_components = 0

    min_seq = None
    max_seq = None
    max_defect_seq = None
    max_components_seq = None

    for sn, e_tuple, seq in states:
        D = valid_distances_from_antidiagonal(e_tuple)
        width = width_from_distances(D)
        defect = sum(e_tuple) + 2 - width
        comp = K_components_count(sn, e_tuple)

        if min_width is None or width < min_width:
            min_width = width
            num_min = 1
            min_seq = seq
        elif width == min_width:
            num_min += 1

        if max_width is None or width > max_width:
            max_width = width
            num_max = 1
            max_seq = seq
        elif width == max_width:
            num_max += 1

        if is_interval_complete(e_tuple):
            num_complete += 1
        else:
            num_defective += 1

        if defect > max_defect:
            max_defect = defect
            max_defect_seq = seq

        if comp > max_components:
            max_components = comp
            max_components_seq = seq

    return {
        "N": N,
        "min_width": min_width,
        "num_min": num_min,
        "min_seq": min_seq,
        "max_width": max_width,
        "num_max": num_max,
        "max_seq": max_seq,
        "num_complete": num_complete,
        "num_defective": num_defective,
        "max_defect": max_defect,
        "max_defect_seq": max_defect_seq,
        "max_components": max_components,
        "max_components_seq": max_components_seq,
    }

def print_table(by_n):
    """
    Print the table data for n=2,...,10.
    """
    header = (
        "n | N_n | m_n | #min | M_n | #max | "
        "#ic | #def | max def | max comp | max seq"
    )
    print(header)
    print("-" * len(header))

    for n in range(2, 11):
        stats = summarize_states(by_n[n])
        print(
            n,
            stats["N"],
            stats["min_width"],
            stats["num_min"],
            stats["max_width"],
            stats["num_max"],
            stats["num_complete"],
            stats["num_defective"],
            stats["max_defect"],
            stats["max_components"],
            stats["max_seq"],
            sep=" | "
        )

def compute_N_next(states):
    """
    Given states for G_n, compute N_{n+1} by summing the number of
    positive valid distances.
    """
    total = 0
    for sn, e_tuple, seq in states:
        D = valid_distances_from_antidiagonal(e_tuple)
        total += sum(1 for d in D if d > 0)
    return total

def verify_first_hole():
    """
    Verify the first-hole example S=(2,3,5,9,15).
    """
    S = (2, 3, 5, 9, 15)
    sn = 15
    e_tuple = (6, 2, 0, 1)

    C = candidate_set_from_state(sn, e_tuple)
    K = valid_extensions_from_state(sn, e_tuple)
    H = tuple(sorted(set(C) - set(K)))

    print("\nFirst-hole verification")
    print("S =", S)
    print("right anti-diagonal =", e_tuple)
    print("A(S) =", sum(e_tuple))
    print("C_S =", C)
    print("K_S =", K)
    print("H_S =", H)

def V_sequence(n):
    """
    The component-doubling family V_n.
    """
    assert n >= 5
    return (2,) + tuple(2**(i-1) + 1 for i in range(2, n)) + (2**(n-1) - 1,)

def V_antidiagonal(n):
    """
    Right anti-diagonal of V_n:
    e_i = 2^{n-i-1}-2 for 1<=i<=n-2, and e_{n-1}=1.
    """
    assert n >= 5
    return tuple(2**(n-i-1) - 2 for i in range(1, n-1)) + (1,)

def verify_V_family(up_to=10):
    """
    Verify the V_n formulas for n=5,...,up_to.
    """
    print("\nV_n family verification")
    print("n | V_n | |K| | components | defect")
    for n in range(5, up_to + 1):
        S = V_sequence(n)
        sn = S[-1]
        e_tuple = V_antidiagonal(n)
        D = valid_distances_from_antidiagonal(e_tuple)
        width = width_from_distances(D)
        comp = K_components_count(sn, e_tuple)
        defect = sum(e_tuple) + 2 - width
        print(n, S, width, comp, defect, sep=" | ")

if __name__ == "__main__":
    by_n = generate_states(10)
    print()
    print_table(by_n)

    N11 = compute_N_next(by_n[10])
    print("\nN_11 =", N11)

    verify_first_hole()
    verify_V_family(10)
\end{lstlisting}

On a standard laptop, this anti-diagonal-state implementation produces
the table through \(n=10\) and computes \(N_{11}\) in well under a minute.
Runtime will vary by machine.

\section*{Acknowledgments}

This project made use of AI tools during the exploratory stage,
including computational experimentation, conjecture generation, and
preliminary drafting. The computational claims were verified against an
independent implementation; responsibility for the mathematical
statements and proofs rests with the author, and the arguments are
offered for expert review.

\end{document}